\documentclass[a4paper,10pt]{amsart}
\usepackage[utf8]{inputenc}
\usepackage{amsxtra}
\usepackage{amsopn}
\usepackage{amsmath,amsthm,amssymb}
\usepackage{amscd}
\usepackage{amsfonts}
\usepackage{latexsym}
\usepackage{verbatim}
\usepackage{MnSymbol}
\usepackage{xcolor}
\usepackage{tikz-cd}
\usepackage{hyperref}
\usepackage[shortlabels]{enumitem}

\newcommand{\la}{\langle}
\newcommand{\ra}{\rangle}
\newcommand{\lv}{\lVert}
\newcommand{\rv}{\rVert}
\theoremstyle{plain}
\newtheorem{theorem}{Theorem}[section]
\newtheorem*{theorem*}{Theorem}

\newtheorem{lemma}[theorem]{Lemma}
\newtheorem{proposition}[theorem]{Proposition}
\newtheorem{corollary}[theorem]{Corollary}
\newtheorem{remark}[theorem]{Remark}

\newtheorem*{mt*}{Main Theorem}
\newcommand\C{{\mathbb C}}

\renewcommand\phi{{\varphi}}

\newcommand\N{{\mathbb N}}

\renewcommand\H{{\mathcal H}}

\newcommand{\del}{\partial}
\newcommand{\delbar}{{\overline{\del}}}

\newcommand{\cinf}{\mathcal{C}^\infty}

\let\inf\undefined
\DeclareMathOperator*{\inf}{inf\vphantom{p}}

\DeclareMathOperator{\spec}{spec}

\DeclareMathOperator{\im}{{im\,}}

\DeclareMathOperator{\D}{\mathcal{D}}
\DeclareMathOperator{\Gr}{Gr}

\DeclareMathOperator{\gr}{Gr}

\let\phi\varphi
\let\c\overline

\title[An $L^2$-$\del\delbar$-Lemma on a class of complete K\"ahler manifolds]{An $L^2$-$\del\delbar$-Lemma on a class of\\ complete K\"ahler manifolds}

\author{Riccardo Piovani}
\address{Dipartimento di Matematica \lq\lq G. Peano",
Universit\`{a} degli Studi di Torino, Via Carlo Alberto 10,
10123 Torino, Italy}
\email{riccardo.piovani@unito.it}

\keywords{spectral gap, closed image, Aeppli Laplacian, Bott-Chern Laplacian}
\thanks{\newline 
The author is partially supported by GNSAGA of INdAM
}
\subjclass[2020]{53C55, 32Q15}

\begin{document}

\begin{abstract}
    We prove an $L^2$-$\del\delbar$-Lemma involving smooth square integrable forms on complete K\"ahler manifolds, provided that the unique self-adjoint extension of the Hodge Laplacian on the Hilbert space of $L^2$-forms has a gap in its spectrum near zero. This generalises the classical $\del\delbar$-Lemma on compact K\"ahler manifolds.
\end{abstract}

\maketitle

\section{Introduction}
On compact K\"ahler manifolds, the $\del\delbar$-Lemma \cite[Lemmas 5.11, 5.15]{DGMS} states that for every smooth, complex-valued $k$-form $\alpha\in A^k_\C$ which is $\del$- and $\delbar$-closed, \textit{i.e.}, $\del\alpha=\delbar\alpha=0$,  then the following conditions are equivalent:
\begin{enumerate}
    \item $\alpha=\del\delbar\beta$, for $\beta\in A^{k-2}_\C$;
    \item $\alpha=\del\gamma$, for $\gamma\in  A^{k-1}_\C$;
    \item $\alpha=\delbar\zeta$, for $\zeta\in  A^{k-1}_\C$;
    \item $\alpha=\del\eta+\delbar\theta$, for $\eta,\theta\in  A^{k-1}_\C$;
    \item $\alpha=d\lambda$, for $\lambda\in  A^{k-1}_\C$,
\end{enumerate}
where the exterior derivative decomposes as $d=\del+\delbar$.
The validity of the above statement is invariant under holomorphic birational maps between compact complex manifolds \cite[Theorem 5.22]{DGMS}, and has important cohomological and topological implications \cite[Remark 5.15]{DGMS}: all the complex cohomology spaces (Aeppli, Bott-Chern, Dolbeault cohomology and its conjugate) are isomorphic, and the $k$-de Rham cohomology group is isomorphic to the direct sum of all the $(p,q)$-Dolbeault cohomology groups for $p+q=k$. A numerical characterisation of the validity of the $\del\delbar$-Lemma involving the dimensions of de Rham, Aeppli and Bott-Chern cohomology was proven in \cite{AT}.

The main purpose of this paper is to generalise the $\del\delbar$-Lemma to an $L^2$-$\del\delbar$-Lemma involving square integrable forms on a class of complete K\"ahler manifolds. We now introduce the result. Given a Hermitian manifold $(M,g)$, denote by $d^*,\del^*,\delbar^*$ the $L^2$-formal adjoints of the differential operators $d,\del,\delbar$, and by $\Delta_D:=DD^*+D^*D$ for $D\in\{d,\del,\delbar\}$ the Hodge, $\del$- and Dolbeault Laplacians, which are second-order formally self-adjoint elliptic operators. It is well known that if the metric is complete, then all these second-order Laplacians are essentially self-adjoint, namely, their restrictions to smooth compactly supported forms have a unique self-adjoint extension as unbounded operators between the Hilbert spaces of $L^2$-forms $L^2\Lambda^\bullet_\C:=\oplus_{k\in\N}L^2\Lambda^k_\C$, \textit{i.e.}, the space of possibly non-continuous, complex-valued forms with bounded $L^2$-norm. These self-adjoint operators are non-negative, therefore their spectrum is contained in $[0,+\infty)$. We say that such an operator has a \emph{spectral gap} if its spectrum is contained in $\{0\}\cup[C,+\infty)$ for some $C>0$. This is equivalent to the image of the operator being closed. 

On the other hand, if the Hermitian metric $g$ is K\"ahler, \textit{i.e.}, its fundamental $2$-form is closed, then by K\"ahler identities the above second-order Laplacians coincide up to a factor, namely, $\Delta_d=2\Delta_\delbar=2\Delta_\del$. As a consequence, if the metric is both K\"ahler and complete, then all the self-adjoint extensions of these Laplacians coincide up to a factor. In the introduction we denote the unique self-adjoint extension of the Hodge Laplacian by $\Delta$. 

Besides compact K\"ahler manifolds, where the spectrum is discrete, examples of complete K\"ahler manifolds where $\Delta$ has a spectral gap on the whole algebra of forms $L^2\Lambda^\bullet_\C$ are given by complete K\"ahler $d$-bounded manifolds \cite[Theorem 1.4.A]{G}, where $d$-bounded means that the fundamental form $\omega$ is not only closed but also exact $\omega=d\eta$, and $\eta$ is bounded in the pointwise norm. Explicit examples of complete K\"ahler $d$-bounded manifolds are: complete simply connected K\"ahler manifolds with sectional curvature bounded from above by a negative constant \cite[0.1.B]{G} \cite[Lemma 3.2]{CY}, Hermitian symmetric spaces of non compact type \cite[0.1.C'']{G} \cite[Proposition 8.6]{Ba}, hyperconvex domains in Stein manifolds \cite[0.3.A(b)]{G} \cite[Proposition 2.2]{Do}, strictly pseudoconvex domains in $\C^n$ with the Bergman metric \cite[Proposition 3.4]{Do}, bounded homogeneous domains in $\C^n$ \cite[Theorem 1]{KO}.
We expect this $L^2$-$\del\delbar$-Lemma to be useful for further studies of the above classes of complete K\"ahler manifolds. For some recent results related to complete K\"ahler $d$-bounded manifolds we refer to \cite{PT1,BDET,H}. The notion of an $L^2$-$\del\delbar$-Lemma is also of interest on normal coverings of compact complex manifolds, in light of the inequalities between $L^2$ invariants proved in \cite{HP,BP}.

We use the notation $L^2A^k_\C$ to denote the space of smooth $L^2$-forms $L^2\Lambda^k_\C\cap A^k_\C$. The following theorem is the main result of the paper.
\begin{theorem}[{see Theorem \ref{theorem l2 del delbar lemma}}]\label{theorem intro l2 del delbar lemma}
    Let $(M,g)$ be a complete K\"ahler manifold such that the self-adjoint Laplacian $\Delta$ has a spectral gap in $L^2\Lambda^{k}_\C$. Given a smooth $L^2$-form $\alpha\in L^2A^k_\C$ which satisfies $\del\alpha=\delbar\alpha=0$, then the following conditions are equivalent:
    \begin{enumerate}
        \item $\alpha=\del\delbar\beta$, for $\beta\in L^2 A^{k-2}_\C$;
        \item $\alpha=\del\gamma$, for $\gamma\in L^2 A^{k-1}_\C$;
        \item $\alpha=\delbar\zeta$, for $\zeta\in L^2 A^{k-1}_\C$;
        \item $\alpha=\del\eta+\delbar\theta$, for $\eta,\theta\in L^2 A^{k-1}_\C$;
        \item $\alpha=d\lambda$, for $\lambda\in L^2 A^{k-1}_\C$.
    \end{enumerate}
\end{theorem}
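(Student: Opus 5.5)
The plan is to route every implication through one statement:
\[
(\ast)\qquad \alpha\perp\H^k:=\ker\Delta\cap L^2\Lambda^k_\C .
\]
I will show that each of (1)--(5) implies $(\ast)$. Then, using the spectral gap, I will show that $(\ast)$ together with $\del\alpha=\delbar\alpha=0$ gives a primitive $\beta$ as in (1) with the extra property that $\del\beta,\delbar\beta\in L^2$. With this strengthened (1), the implications (1)$\Rightarrow$(2),(3),(4),(5) are immediate: take $\gamma=\delbar\beta$, $\zeta=-\del\beta$, and $\lambda=\delbar\beta$, since $d\delbar\beta=\del\delbar\beta$.

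\emph{Step 1: orthogonality.} On a complete manifold the minimal and maximal closed extensions of $d,\del,\delbar$ coincide (Gaffney cutoff argument). Hence a smooth $\gamma\in L^2$ with $\del\gamma\in L^2$ lies in the domain of the closure of $\del$, and we may integrate by parts against any $h\in\H^k$. By the Kähler identities and completeness, such an $h$ is smooth and satisfies $\del h=\delbar h=\del^*h=\delbar^*h=d^*h=0$. This gives $\langle\del\gamma,h\rangle=\langle\gamma,\del^*h\rangle=0$, and the same argument handles (3), (4) and (5). For (1) with only $\beta\in L^2$, I would pair $\chi_j\alpha$ with $h$, where $\chi_j$ are exhausting cutoffs, and move both derivatives onto $\chi_jh$. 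The terms involving only $d\chi_j$ are controlled because $\del^*h=\delbar^*h=0$. I expect the term with second derivatives of $\chi_j$, together with the term $\langle \beta,\iota(\cdot)\nabla h\rangle$, to be \textbf{the main obstacle}.

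To handle that obstacle, I would first try a two-step weak argument instead: show that $\delbar\beta$ is locally in $L^2$ and in the domain of the maximal $\del$ in a weighted sense. Failing that, I would regularise $\beta$ with the heat semigroup $e^{-t\Delta}$, which commutes with $\del,\delbar$ and their adjoints, and let $t\to0$.

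\emph{Step 2: the primitive.} The spectral gap in degree $k$ gives a bounded Green operator $G$ on $L^2\Lambda^k_\C$. It inverts $\Delta$ on $(\H^k)^\perp$, is zero on $\H^k$, and commutes with $\del,\delbar,\del^*,\delbar^*$ on the appropriate domains, by the functional calculus and the Kähler identities. Assuming $(\ast)$, we have $\alpha=\Delta G\alpha=2\Delta_{\delbar}G\alpha=2\delbar\delbar^*G\alpha$, since $\delbar G\alpha=G\delbar\alpha=0$. Next, $\del\delbar^*G\alpha=-\delbar^*\del G\alpha=0$ and $\del^*\delbar^* G\alpha$ is handled likewise. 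Applying the same identity with $\Delta_\del$ to $\delbar^*G\alpha$ gives $\delbar^*G\alpha=2\del\del^*G\delbar^*G\alpha$. Therefore
\[
\alpha=\del\delbar\beta,\qquad \beta:=-4\,\del^*\delbar^*G^2\alpha .
\]

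\emph{Step 3: regularity and integrability.} Smoothness of $\beta$ follows from elliptic regularity, because $\alpha$ is smooth and $\Delta G^2\alpha=G\alpha$. For square integrability, the operators $\del^*\delbar^*G^2$, $\delbar\del^*\delbar^*G^2=-\del^*\delbar\delbar^*G^2$ and $\del\del^*\delbar^*G^2$ are bounded on $L^2\Lambda^k_\C$. This follows from the functional calculus, since each is dominated by $\Delta^{m}G^2$ with $m\le 3/2$, and hence by $G^{1/2}$. Using $\|\delbar^*\psi\|^2+\|\delbar\psi\|^2=\tfrac12\langle\Delta\psi,\psi\rangle$ iteratively, this gives $\beta,\delbar\beta,\del\beta\in L^2$. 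This completes the strengthened (1) and closes the cycle of equivalences.
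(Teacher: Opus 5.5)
Your Steps 2--3 are essentially sound, but the cycle is not closed. The implication (1)$\Rightarrow(\ast)$, where you only know $\beta\in L^2A^{k-2}_\C$ and $\del\delbar\beta=\alpha\in L^2$, is the point you flag as the main obstacle and then leave open. Without it you cannot pass from (1) to (2)--(5), since $\delbar\beta$ need not be square integrable.

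Neither of your suggested routes closes it as stated:
\begin{itemize}
\item The cutoff computation fails on a general complete manifold. In general you have neither cutoffs $\chi_j$ with uniformly small Hessian nor $L^2$ control of $\nabla h$ for $h\in\H^k$; typically both require bounded-geometry type assumptions.
\item The heat-semigroup regularisation only relocates the problem. The identity $\la\del\delbar e^{-t\Delta}\beta,h\ra=0$ is fine. But letting $t\to0$ requires commuting $e^{-t\Delta}$ with the \emph{maximal} extension of the second-order operator $\del\delbar$. Justifying that needs $e^{-t\Delta}$ to map compactly supported forms into the domain of the \emph{minimal} extension $\delbar^*\del^*_s$. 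Commuting with $\del,\delbar,\del^*,\delbar^*$ only places such forms in the domain of the composition $\delbar^*_s\circ\del^*_s$, which is not the same thing.
\end{itemize}
What you need is precisely $\H^k\subseteq\ker\delbar^*\del^*_s=(\im\del\delbar_w)^\perp$.

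The missing idea is to use the essential self-adjointness of the \emph{fourth-order} operator $(\Delta^2)_0$ on a complete manifold (recalled in Subsection \ref{subsection complete kahler}), together with the K\"ahler identity
\[
\Delta_\delbar^2=\del\delbar\delbar^*\del^*+\delbar^*\del^*\del\delbar+\del^*\delbar\delbar^*\del+\delbar^*\del\del^*\delbar .
\]
Since $h\in\H^k$ is smooth with $\Delta^2h=0$, it lies in the domain of the closure of $(\Delta^2)_0$. So there are compactly supported smooth $h_j\to h$ with $\Delta^2h_j\to0$. Then
\[
\lv\delbar^*\del^*h_j\rv^2\le\la\Delta_\delbar^2h_j,h_j\ra=\tfrac14\la\Delta^2h_j,h_j\ra\to0,
\]
and hence $\la\del\delbar\beta,h\ra=\lim_j\la\beta,\delbar^*\del^*h_j\ra=0$. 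This is what the paper encodes in \eqref{equation square dolbeault aeppli bc 4} and Theorem \ref{theorem complete kahler equality harmonic} (or, under the gap, Corollary \ref{corollary del delbar strong weak}).

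With this inserted, your proof works and takes a genuinely different, more elementary route than the paper's. The paper uses Green operators of the Kodaira--Spencer Laplacians, which requires Theorem \ref{theorem spectral gap kodaira spencer laplacians}; you use $G^2$ for $\Delta$ instead. You even obtain $\del\beta=-2\delbar^*G\alpha$ and $\delbar\beta=2\del^*G\alpha$ in $L^2$ without the bounded geometry of Remark \ref{remark bounded geometry}.

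Two minor repairs are needed:
\begin{itemize}
\item The gap is assumed only in degree $k$, so do not apply $G$ to the $(k-1)$-form $\delbar^*G\alpha$. Instead set $\phi:=\delbar^*G^2\alpha$, check that $\Delta\phi=\delbar^*G\alpha$ and $\del\phi=0$, and conclude $\delbar^*G\alpha=2\del\del^*\phi$.
\item In (4) only the sum $\del\eta+\delbar\theta$ is known to be $L^2$. Apply the cutoff argument to the first-order operator $(\eta,\theta)\mapsto\del\eta+\delbar\theta$, rather than to $\del$ and $\delbar$ separately.
\end{itemize}
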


The proof of Theorem \ref{theorem intro l2 del delbar lemma} relies primarily on new spectral properties of some self-adjoint operators introduced in \cite{HP} as part of the development of the Aeppli and Bott-Chern versions of the $L^2$-Hodge theory of general Hermitian manifolds and of complete K\"ahler manifolds. More precisely, we prove that if $\Delta$ has a spectral gap, there are self-adjoint extensions of fourth-order elliptic Aeppli and Bott-Chern Laplacians having a spectral gap, thereby allowing one to use arguments based on elliptic regularity. Since this constitutes the core of the proof, we now discuss these Aeppli and Bott–Chern Laplacians in more detail.

Given a compact Hermitian manifold, there is a unique choice for the second-order Hodge, $\del$- and Dolbeault Laplacians $\Delta_D=DD^*+D^*D$, for $D\in\{d,\del,\delbar\}$, such that the corresponding kernels are isomorphic to de Rham, $\del$- and Dolbeault cohomology, via classical Hodge theory. In the Aeppli and Bott-Chern cases the situation is slightly different: there are multiple possible choices for the associated Laplacians. We recall the definitions of Aeppli and Bott-Chern cohomology via the related differential complex. We denote the spaces of $(p,q)$-forms by $A^{p,q}$, so that $A^k_\C=\oplus_{p+q=k}A^{p,q}$, and the Hilbert space of $L^2$-$(p,q)$-forms by $L^2\Lambda^{p,q}$. For any choice of integers $(p,q)$ we consider the complex
\[
\dots\longrightarrow A^{p-1,q-2}\oplus A^{p-2,q-1}\overset{\delbar\oplus\del}{\longrightarrow} A^{p-1,q-1}\overset{\del\delbar}{\longrightarrow} A^{p,q}\overset{\del+\delbar}{\longrightarrow} A^{p+1,q}\oplus A^{p,q+1}{\longrightarrow}\dots
\]
where $\delbar\oplus\del$ operates on $A^{p-1,q-2}\oplus A^{p-2,q-1}$ as $\delbar$ on $A^{p-1,q-2}$ plus $\del$ on $A^{p-2,q-1}$. The Aeppli and Bott-Chern cohomology spaces are defined as
\begin{align*}
H^{p-1,q-1}_A:=\frac{\ker\del\delbar\cap A^{p-1,q-1}}{\im\delbar\oplus\del},&&H^{p,q}_{BC}:=\frac{\ker(\del+\delbar)\cap A^{p,q}}{\im\del\delbar}.
\end{align*}
The \lq\lq natural" Aeppli and Bott-Chern Laplacians are then defined as
\begin{align*}
\Delta_A&:=\delbar^*\del^*\del\delbar+(\delbar\oplus\del)(\delbar\oplus\del)^*=\delbar^*\del^*\del\delbar+\del\del^*+\delbar\delbar^*,\\
\Delta_{BC}&:=\del\delbar\delbar^*\del^*+(\del+\delbar)^*(\del+\delbar)=\del\delbar\delbar^*\del^*+\del^*\del+\delbar^*\delbar.
\end{align*}
The kernels of these operators are isomorphic to the Aeppli and Bott-Chern cohomology spaces, but they are not elliptic \cite[Proposition 2.1]{S}. The first elliptic fourth-order operators whose kernels were shown to be isomorphic to the Aeppli and Bott-Chern cohomologies were defined in \cite{KS} as
\begin{align*}
\tilde\Delta_{A} &:=
\del\delbar\delbar^*\del^*+
\delbar^*\del^*\del\delbar+
\del\delbar^*\delbar\del^*+\delbar\del^*\del\delbar^*+
\del\del^*+\delbar\delbar^*,\\
\tilde\Delta_{BC} &:=
\del\delbar\delbar^*\del^*+
\delbar^*\del^*\del\delbar+\del^*\delbar\delbar^*\del+\delbar^*\del\del^*\delbar
+\del^*\del+\delbar^*\delbar.
\end{align*}
We will refer to these operators as the Kodaira-Spencer Laplacians. Another pair of elliptic fourth-order operators whose kernels are isomorphic to the Aeppli and Bott-Chern cohomologies were defined in \cite{V} as
\begin{align*}
\square_{A}&:=\delbar^*\del^*\del\delbar+((\delbar\oplus\del)(\delbar\oplus\del)^*)^2=\delbar^*\del^*\del\delbar+(\del\del^*+\delbar\delbar^*)^2,\\
\square_{BC}&:=\del\delbar\delbar^*\del^*+((\del+\delbar)^*(\del+\delbar))^2=\del\delbar\delbar^*\del^*+(\del^*\del+\delbar^*\delbar)^2.
\end{align*}
We will refer to these operators as the Varouchas Laplacians.

In \cite[Corollary 8.12]{HP} it was proven that on a complete K\"ahler manifold, if $\Delta$ has a spectral gap in $L^2\Lambda^\bullet_\C$, then there are self-adjoint extensions of $\Delta_A$ and $\Delta_{BC}$ having a spectral gap. Being $\Delta_A$ and $\Delta_{BC}$ non-elliptic, this was not sufficient to prove Theorem \ref{theorem intro l2 del delbar lemma}. In fact, to prove our main result, we use the spectral gap of the elliptic Kodaira-Spencer Laplacians, which is shown in Theorem \ref{theorem spectral gap kodaira spencer laplacians}. We also prove a spectral gap of the Varouchas Laplacians in Theorem \ref{theorem spectral gap varouchas laplacians}. We remark that the spectral gap assumption of $\Delta$ is essential for a statement which directly generalises the classical $\del\delbar$-Lemma such as Theorem \ref{theorem intro l2 del delbar lemma} does. Indeed, in the absence of a spectral gap, the images of the closed extensions of $d$, $\del$, $\delbar$, and $\del\delbar$ fail to be closed. For a weaker statement on complete K\"ahler manifolds without assuming a spectral gap, see Theorem \ref{theorem reduced l2 del delbar lemma} (cf. \cite[Corollary 8.6]{HP}).

The paper is organised as follows. In Section \ref{section preliminaries}, we present preliminaries on Hilbert complexes, self-adjoint operators, spectral gaps, minimal and maximal closed extensions of differential operators, differential complexes on complex manifolds, self-adjoint extensions of second and fourth-order complex Laplacians, $L^2$ Hodge theory on complete K\"ahler manifolds. 
In Section \ref{section spectral gap}, we prove the spectral gaps of the elliptic Aeppli and Bott-Chern Laplacians. In Theorem \ref{theorem closed image del delbar} we also present a new proof of the closure of the image of the closed extensions of the operator $\del\delbar$, originally proved in \cite[Theorem 8.10]{HP}. Finally, in section \ref{section l2 del delbar lemma}, we prove our main result Theorem \ref{theorem intro l2 del delbar lemma}. In Remark \ref{remark bounded geometry} we also point out an improvement of Theorem \ref{theorem intro l2 del delbar lemma} when the manifold is of bounded geometry.

\section{Preliminaries}\label{section preliminaries}

\subsection{Hilbert complexes}

A \emph{Hilbert complex} consists of a complex of Hilbert spaces $\H_i$ along with linear operators $D_i:\H_i\to\H_{i+1}$ which are \emph{densely defined} ($D_i$ is defined on a domain $\D(D_i)$ which is a dense subspace of $\H_i$) and \emph{closed} (the graph of $D_i$ is closed in $\H_i\times\H_{i+1}$) of the form
\[
\H_0\overset{D_0}{\longrightarrow}\H_1\overset{D_1}\longrightarrow\dots\overset{D_{n-2}}\longrightarrow\H_{n-1}\overset{D_{n-1}}{\longrightarrow}\H_n
\]
such that $\im D_i\subseteq \ker D_{i+1}$. The notion of \emph{Hilbert complex} was systematically analysed by Br\"uning and Lesch in \cite{BL}.
To any Hilbert complex we can associate its \emph{cohomology} $H_i$ and \emph{reduced cohomology} $\bar H_i$
\[
H_i:=\frac{\ker D_i}{\im D_{i-1}},\ \ \ \bar H_i:=\frac{\ker D_i}{\c{\im D_{i-1}}}.
\]

We denote by $P^t:\H_2\to\H_1$ the Hilbert adjoint of any densely defined operator $P:\H_1\to\H_2$, which is defined by the relation
\begin{align*}
\la P x, y \ra_2=\la x, P^ty\ra_1 &&\forall x\in \D(P),\ \forall y\in \D(P^t),
\end{align*}
with domain
\[
\D(P^t):=\{y\in\H_2\,:\,\H_1\ni x\mapsto\la Px,y\ra_2\text{ is continuous}\},
\]
where $\la\cdot,\cdot\ra_i$ is the inner product on $\H_i$.
Recall that $P^t$ is always closed since its graph satisfies $\Gr(P^t)=\Gr(-P)^\perp$. Moreover, if $P$ is also closed, then $\D(P^t)^\perp=\{0\}$ and so $P^t$ is also densely defined. In this case
\begin{align*}
 P^{tt}=P,&&   \ker P^\perp=\c{\im P^t}.
\end{align*}
A fundamental property of Hilbert complexes is the following.

\begin{lemma}[{\cite[Lemma 2.1]{BL}}]
\label{lemma decomposition hilbert complexes}
Given a Hilbert complex $D_i:\H_i\to\H_{i+1}$, there are orthogonal decompositions
\begin{align*}
\H_i&=\ker D_i\cap\ker D_{i-1}^t\oplus\c{\im D_{i-1}}\oplus\c{\im D_i^t},\\
\ker D_i&=\ker D_i\cap\ker D_{i-1}^t\oplus\c{\im D_{i-1}},
\\ \ker D_{i-1}^t &= \ker D_i\cap\ker D_{i-1}^t \oplus \c{\im D_i^t}.
\end{align*}
\end{lemma}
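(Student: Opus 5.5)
The plan is to prove the three decompositions in order, each obtained from the previous one, using only $\ker P^\perp=\c{\im P^t}$ for closed densely defined $P$ and the complex property $\im D_{i-1}\subseteq\ker D_i$.

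\emph{Step 1: the first splitting.} Since $D_i$ is closed, $\ker D_i$ is a closed subspace of $\H_i$, so $\H_i=\ker D_i\oplus(\ker D_i)^\perp$. By the recalled identity, $(\ker D_i)^\perp=\c{\im D_i^t}$, hence
\[
\H_i=\ker D_i\oplus\c{\im D_i^t}.
\]
Applying the same identity to $P=D_{i-1}^t$ and using $D_{i-1}^{tt}=D_{i-1}$ gives $(\ker D_{i-1}^t)^\perp=\c{\im D_{i-1}}$, and therefore
\[
\H_i=\ker D_{i-1}^t\oplus\c{\im D_{i-1}}.
\]

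\emph{Step 2: the two kernels.} The complex property gives $\im D_{i-1}\subseteq\ker D_i$. Because $\ker D_i$ is closed, this upgrades to $\c{\im D_{i-1}}\subseteq\ker D_i$. Hence $\ker D_i$ is the orthogonal sum of $\c{\im D_{i-1}}$ and the orthogonal complement of $\c{\im D_{i-1}}$ inside $\ker D_i$. By Step 1 that complement is $\ker D_i\cap(\c{\im D_{i-1}})^\perp=\ker D_i\cap\ker D_{i-1}^t$, which is the second decomposition.

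For the third one, I first show $\c{\im D_i^t}\subseteq\ker D_{i-1}^t$. Take $y=D_i^tz$ and any $x\in\D(D_{i-1})$. Then $\la D_{i-1}x,y\ra=\la D_iD_{i-1}x,z\ra=0$. This pairing is continuous (indeed zero) in $x$, so $y\in\D(D_{i-1}^t)$ and $D_{i-1}^ty=0$. The inclusion passes to the closure because $\ker D_{i-1}^t$ is closed. The same argument as above, now using $(\c{\im D_i^t})^\perp=\ker D_i$, yields
\[
\ker D_{i-1}^t=\ker D_i\cap\ker D_{i-1}^t\oplus\c{\im D_i^t}.
\]

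\emph{Step 3: combining.} Substituting the second decomposition into $\H_i=\ker D_i\oplus\c{\im D_i^t}$ gives the three-term decomposition of $\H_i$. The summands are mutually orthogonal: $\c{\im D_{i-1}}\subseteq\ker D_i\perp\c{\im D_i^t}$, and the harmonic part is orthogonal to both closures by Step 1.

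The only point requiring care is the domain verification that $\im D_i^t\subseteq\D(D_{i-1}^t)$, which is where the definition of the Hilbert adjoint is used. Everything else is formal.
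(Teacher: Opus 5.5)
Your proof is correct and complete. The paper does not reprove this lemma but cites Br\"uning--Lesch, and your argument is the standard one: split $\H_i=\ker D_i\oplus\overline{\im D_i^t}$ via $(\ker P)^\perp=\overline{\im P^t}$, then refine $\ker D_i$ and $\ker D_{i-1}^t$ using $\overline{\im D_{i-1}}\subseteq\ker D_i$ and $\overline{\im D_i^t}\subseteq\ker D_{i-1}^t$. The one implicit point, that $D_{i-1}x\in\D(D_i)$ so the adjoint relation for $D_i$ applies, follows from $\im D_{i-1}\subseteq\ker D_i$.
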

The second decomposition in Lemma \ref{lemma decomposition hilbert complexes} provides an isomorphism
\[
\ker D_i\cap\ker D_{i-1}^t\simeq \bar H_i.
\]

\subsection{Self-adjoint operators}

A linear operator $P:\H\to\H$ on a Hilbert space $\H$ is said to be \emph{self-adjoint} if $P=P^t$, with equality of domains. The operator $P$ is said to be \emph{non-negative} if $\la Px, x\ra\ge0$ for all $x\in\D(P)$. We describe a couple of standard constructions of non-negative self-adjoint operators. 

\begin{theorem}[{\cite[Theorem X.25]{RS2}} or {\cite[Theorem 2.3]{F}}]\label{theorem composition with adjoint}
Let $P:\H\to\H$ be a closed and densely defined linear operator on a Hilbert space $\H$. Then the operator $P^tP$ defined by $(P^tP)x=P^t(Px)$ on the domain
\[
\D(P^t P):=\{x\in\D(P)\,|\,Px\in\D(P^t)\}
\]
is non-negative and self-adjoint.
\end{theorem}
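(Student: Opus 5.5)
We verify directly that $P^tP$ is symmetric, non-negative and densely defined, and then upgrade symmetry to self-adjointness by showing that $I+P^tP$ is surjective, via von Neumann's graph decomposition.

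\textbf{Step 1: symmetry and non-negativity.} For $x,y\in\D(P^tP)$ we have $x,y\in\D(P)$ and $Px,Py\in\D(P^t)$, so the defining relation of the adjoint gives
\[
\la P^tPx,y\ra=\la Px,Py\ra=\la x,P^tPy\ra .
\]
In particular $\la P^tPx,x\ra=\lv Px\rv^2\ge0$. Hence $P^tP$ is symmetric and non-negative, and $P^tP\subseteq(P^tP)^t$ once density of the domain is known.

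\textbf{Step 2: surjectivity of $I+P^tP$ (the key step).} Since $P$ is closed, $\Gr(P)$ is a closed subspace of $\H\oplus\H$. Recall $\Gr(P^t)=\Gr(-P)^\perp$. Applying the unitary $(a,b)\mapsto(b,-a)$ to $\Gr(P^t)$ yields the subspace $\{(-P^ty,y)\}$, and this is exactly the orthogonal complement of $\Gr(P)$. Therefore, for any $h\in\H$, we can decompose
\[
(h,0)=(x,Px)+(-P^ty,y),\qquad x\in\D(P),\ y\in\D(P^t).
\]
The second coordinate gives $y=-Px$, so $Px\in\D(P^t)$ and $x\in\D(P^tP)$. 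The first coordinate then reads $h=x+P^tPx$. Thus $I+P^tP$ is onto $\H$.

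\textbf{Step 3: density and self-adjointness.} Let $z\perp\D(P^tP)$ and write $z=(I+P^tP)x$. Then
\[
0=\la z,x\ra=\lv x\rv^2+\lv Px\rv^2,
\]
so $x=0$ and $z=0$; hence the domain is dense. Now take $w\in\D((P^tP)^t)$. By Step 2 choose $u\in\D(P^tP)$ with $(I+P^tP)u=(I+(P^tP)^t)w$. Since $(P^tP)^t$ extends $P^tP$, the element $w-u$ lies in $\ker(I+(P^tP)^t)$. This kernel equals $\im(I+P^tP)^\perp=\H^\perp=\{0\}$, so $w=u\in\D(P^tP)$. Consequently $(P^tP)^t=P^tP$ with equal domains.

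The only nontrivial point is Step 2, i.e.\ identifying the orthogonal complement of the graph of $P$ through the relation $\Gr(P^t)=\Gr(-P)^\perp$ recalled above. Everything else is routine manipulation of adjoints.
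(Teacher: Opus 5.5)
Your proof is correct and is the standard von Neumann argument behind the cited result (Reed--Simon, Theorem X.25): decomposing $(h,0)=(x,Px)+(-P^ty,y)$ along $\Gr(P)\oplus\Gr(P)^\perp$ gives surjectivity of $I+P^tP$, and density and self-adjointness then follow exactly as you wrote. Your identification $\Gr(P)^\perp=\{(-P^ty,y)\,:\,y\in\D(P^t)\}$ is right; it is best checked directly from the definition of $P^t$, since the paper's formula $\Gr(P^t)=\Gr(-P)^\perp$ only holds up to swapping coordinates, and the paper itself gives no proof beyond the citation.
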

In particular, if $P$ is self-adjoint, then we can define the non-negative and self-adjoint operator $P^2$.

\begin{theorem}[{\cite[Theorem 4.1]{F}}]\label{theorem sum of self ajoint}
Let $P,Q:\H\to\H$ be non-negative and self-adjoint operators on a Hilbert space $\H$. Assume that $\D(P)\cap \D(Q)$ is dense in $\H$. Then the operator $P+Q$ defined by $(P+Q)x=Px+Qx$ on the domain
\[
\D(P+Q):=\D(P)\cap \D(Q)
\]
is non-negative and self-adjoint.
\end{theorem}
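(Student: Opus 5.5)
The plan is to realise $P+Q$ as the operator associated with a closed quadratic form, and then to show that this operator lives on $\D(P)\cap\D(Q)$ and agrees with $Px+Qx$ there.

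The first two properties are immediate. For $x,y\in\D(P)\cap\D(Q)$ we have $\la (P+Q)x,y\ra=\la x,(P+Q)y\ra$, so $P+Q$ is symmetric. It is also densely defined by hypothesis, and $\la (P+Q)x,x\ra=\la Px,x\ra+\la Qx,x\ra\ge0$. For a densely defined, non-negative, symmetric operator $T$, self-adjointness is equivalent to surjectivity of $T+1$. So the whole statement reduces to showing that for every $f\in\H$ there is $x\in\D(P)\cap\D(Q)$ with $Px+Qx+x=f$.

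To produce a candidate solution, I would consider the form
\[
q(x,y)=\la P^{1/2}x,P^{1/2}y\ra+\la Q^{1/2}x,Q^{1/2}y\ra
\]
on $\D(P^{1/2})\cap\D(Q^{1/2})$. This domain contains $\D(P)\cap\D(Q)$, so it is dense. The form is closed, being a sum of two closed non-negative forms. By the Kato representation theorem (the Friedrichs construction), $q$ determines a non-negative self-adjoint operator $S$ that extends $P+Q$. For every $f$, the Riesz theorem applied to $q+\la\cdot,\cdot\ra$ then gives $x=(S+1)^{-1}f$ in the form domain with $q(x,y)+\la x,y\ra=\la f,y\ra$ for all $y$ in the form domain.

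The remaining step, which I expect to be the main obstacle, is regularity: showing $\D(S)\subseteq\D(P)\cap\D(Q)$, equivalently $S=P+Q$. I would first try to establish the a priori estimate
\[
\lv Px\rv^2+\lv Qx\rv^2\le\lv (P+Q)x\rv^2\quad\text{for } x\in\D(P)\cap\D(Q),
\]
which follows from $\mathrm{Re}\la Px,Qx\ra\ge0$. This estimate would show that $P+Q$ is closed on $\D(P)\cap\D(Q)$. An approximation argument, using the resolvents $(1+\epsilon P)^{-1}$ applied to $x=(S+1)^{-1}f$ and passing to the limit with the estimate, would then place $x$ in $\D(P)\cap\D(Q)$.

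This inequality is exactly where the argument stands or falls. For arbitrary non-negative self-adjoint $P,Q$ it can fail, and in general $P+Q$ on the bare intersection of domains need not be self-adjoint. A Schrödinger operator with a strongly singular potential shows that the form sum $S$ can be strictly larger than $P+Q$. I would therefore first check precisely which additional structure the cited source \cite[Theorem 4.1]{F} uses to secure $\mathrm{Re}\la Px,Qx\ra\ge0$ or the regularity $\D(S)\subseteq\D(P)\cap\D(Q)$. Only with such structure can this last step be completed.
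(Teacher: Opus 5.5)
You are right that your last step cannot be completed in this generality. Read literally (operator sum on $\D(P)\cap\D(Q)$, with only density assumed), the statement is false, so your proposal cannot be turned into a proof of it. The paper offers no proof of its own, only the citation.

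A clean counterexample that visibly satisfies the density hypothesis: on $\H=L^2(0,1)$ let $P=-\frac{d^2}{dx^2}$ with the Dirichlet domain $H^2\cap H^1_0$, and $Q=-\frac{d^2}{dx^2}$ with the Neumann domain $\{u\in H^2 : u'(0)=u'(1)=0\}$. Both are non-negative and self-adjoint, and $\D(P)\cap\D(Q)=H^2_0(0,1)\supseteq C^\infty_c(0,1)$ is dense. However, $P+Q=-2\frac{d^2}{dx^2}$ on $H^2_0(0,1)$ is the minimal operator, whose adjoint is $-2\frac{d^2}{dx^2}$ on all of $H^2(0,1)$; the constant $1$ lies in $\ker(P+Q)^t$ but not in $\D(P+Q)$.

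What is true in general is exactly the first half of your proposal. The form sum $S$ is non-negative, self-adjoint and extends $P+Q$. Since $P+Q\subseteq S$ forces $S=S^t\subseteq(P+Q)^t$, the operator $P+Q$ is self-adjoint if and only if $\D(S)\subseteq\D(P)\cap\D(Q)$, which is precisely the regularity you could not prove.

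The same example shows that the inequality you single out is not where the argument stands or falls. For $u\in H^2_0(0,1)$ one has $\la Pu,Qu\ra=\lv u''\rv^2\ge0$, so your a priori estimate holds and $P+Q$ is indeed closed, yet it is not self-adjoint. Closedness is not the issue; surjectivity of $P+Q+1$ is. Your resolvent approximation breaks because $(1+\epsilon P)^{-1}x$ need not lie in $\D(Q)$, so an estimate valid only on $\D(P)\cap\D(Q)$ says nothing about the approximants. A usable hypothesis must either survive regularisation (e.g.\ $\mathrm{Re}\la Px,Q(1+\epsilon Q)^{-1}x\ra\ge0$ for all $x\in\D(P)$ and $\epsilon>0$) or be structural.

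The structure actually present where the paper applies this result to operators built from a complex (the Laplacians of Remark \ref{remark laplacian hilbert complex} and the Varouchas operators) is orthogonality of ranges: $\c{\im Q}\subseteq\ker P$, equivalently $\c{\im P}\subseteq\ker Q$, which comes from $\im D_{i-1}\subseteq\ker D_i$. Then $\H=\c{\im P}\oplus\c{\im Q}\oplus(\ker P\cap\ker Q)$. Writing $f=f_1+f_2+f_3$ accordingly, the element $x=(P+1)^{-1}f_1+(Q+1)^{-1}f_2+f_3$ lies in $\D(P)\cap\D(Q)$ and solves $(P+Q+1)x=f$. This is because $(P+1)^{-1}f_1=f_1-P(P+1)^{-1}f_1\in\ker Q$, and symmetrically for $f_2$. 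Together with symmetry and non-negativity this gives self-adjointness, with no density assumption needed. For sums whose ranges are not mutually orthogonal, such as the Kodaira--Spencer operators on a general Hermitian manifold, self-adjointness on the intersection of the domains needs a separate argument.
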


\begin{remark}\label{remark laplacian hilbert complex}
Given a Hilbert complex $D_i:\H_i\to\H_{i+1}$, then by Theorem \ref{theorem composition with adjoint} the operators $D_i^tD_i$ and $D_{i-1}D_{i-1}^t$ are non-negative and self-adjoint on $\H_i$. Moreover, if $\D(D_i^tD_i)\cap\D(D_{i-1}D_{i-1}^t)$ is dense in $\H_i$, then by Theorem \ref{theorem sum of self ajoint} the Laplacian operator $\Delta_i:=D_i^tD_i+D_{i-1}D_{i-1}^t$ is non-negative and self-adjoint on $\H_i$. This density assumption will be always satisfied in our applications. We finally observe that
\[
\ker \Delta_i=\ker D_i\cap \ker D_{i-1}^t.
\]
\end{remark}

\begin{remark}
The self-adjointness of $\Delta_i$ also follows by the method of \cite[Proposition 2.3]{K}, which does not require any density assumption. However, in Subsection \ref{subsection self adjoint laplacians} we will also need Theorem \ref{theorem sum of self ajoint} since it applies to a wider family of operators.
\end{remark}


\subsection{Closed image and spectral gap}\label{subsection spectral gap}

Given a closed and densely defined operator, the property that its image is closed has several well-known characterisations. We refer, \textit{e.g.}, to \cite[Lemma 4.3]{HP} for a proof.

\begin{lemma}\label{lemma im closed}
Let $P:\H_1\to\H_2$ be a closed and densely defined linear operator between Hilbert spaces. The following conditions are equivalent:
\begin{enumerate}[label=\upshape{\alph*)}]
\item $\im P$ is closed;
\item $\exists C>0 \text{ s.t. } \lv x\rv_1\le C\lv Px\rv_2$ for all $x\in\D(P)\cap \c{\im P^t}$;
\item $\im P^t$ is closed;
\item $\exists C>0 \text{ s.t. } \lv y\rv_2\le C\lv P^ty\rv_1$ for all $y\in\D(P^t)\cap \c{\im P}$.
\end{enumerate}
\end{lemma}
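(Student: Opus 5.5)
The plan is to prove the cycle a) $\Rightarrow$ b) $\Rightarrow$ a) and c) $\Leftrightarrow$ d) directly, and to link the two pairs through the closed range theorem applied to $P$ and $P^t$.

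\textbf{Step a) $\Rightarrow$ b).} Assume $\im P$ is closed and consider the restriction $P_0:=P|_{\D(P)\cap\c{\im P^t}}$. Since $\ker P^\perp=\c{\im P^t}$ and $P$ is closed, $\ker P$ is a closed subspace. Hence every $x\in\D(P)$ splits as $x=x_0+x_1$ with $x_0\in\ker P$ and $x_1\in\c{\im P^t}$. Because $x_0\in\D(P)$, also $x_1\in\D(P)$, and $Px=Px_1$. So $P_0$ is injective, closed (its graph is the intersection of $\Gr(P)$ with a closed subspace) and surjective onto $\im P$. The space $\im P$ is Hilbert by assumption, so $P_0^{-1}:\im P\to\c{\im P^t}$ is a closed, everywhere defined operator. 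By the closed graph theorem it is bounded, which gives the constant $C$.

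\textbf{Step b) $\Rightarrow$ a).} Take $y_n=Px_n\to y$; by the decomposition above we may assume $x_n\in\c{\im P^t}$. The estimate in b) makes $(x_n)$ Cauchy, so $x_n\to x$. Closedness of $P$ then yields $x\in\D(P)$ and $Px=y$.

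\textbf{Steps c) $\Leftrightarrow$ d) and the link.} The same argument applied to $P^t$, which is closed and densely defined with $P^{tt}=P$, gives c) $\Leftrightarrow$ d). It remains to show a) $\Leftrightarrow$ c), and by the symmetry $P\leftrightarrow P^t$ it suffices to prove b) $\Rightarrow$ c). This is the main step. Let $z\in\c{\im P^t}=\ker P^\perp$. Define a functional on $\im P$ by $\ell(Px):=\la x_1,z\ra_1$, where $x_1$ is the $\c{\im P^t}$-component of $x$. It is well defined and bounded with $|\ell(Px)|\le C\lv Px\rv_2\lv z\rv_1$ by b). Moreover $\la x_1,z\ra_1=\la x,z\ra_1$ since $z\perp\ker P$. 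Since $\im P$ is closed (b) $\Rightarrow$ a)), Riesz gives $y\in\im P$ with $\la Px,y\ra_2=\la x,z\ra_1$ for all $x\in\D(P)$. Thus $y\in\D(P^t)$ and $P^ty=z$, so $\im P^t=\c{\im P^t}$ is closed.

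The only delicate points are the domain bookkeeping in the splitting $x=x_0+x_1$ and the use of $z\perp\ker P$ in the last step; everything else is standard closed graph and Riesz representation.
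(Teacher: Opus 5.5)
Your proof is correct and complete. The splitting $x=x_0+x_1$ with $x_0\in\ker P$ gives $x_1\in\D(P)$ and $Px=Px_1$, and the closed graph theorem then yields a)$\Rightarrow$b). The Cauchy argument gives b)$\Rightarrow$a). Passing to $P^t$ via $P^{tt}=P$ gives c)$\Leftrightarrow$d). In the Riesz step, the functional $\ell$ is well defined because $x_1$ is the orthogonal projection of $x$ onto $(\ker P)^\perp$. From $\la Px,y\ra_2=\la x,z\ra_1$ you correctly conclude $y\in\D(P^t)$ and $P^ty=z$. The symmetry reduction to b)$\Rightarrow$c) closes the cycle: applied to $P^t$ it gives d)$\Rightarrow$a).

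The paper does not prove this lemma itself; it only refers to Lemma 4.3 of [HP]. So there is no in-paper argument to compare against. Your write-up is a self-contained version of the standard argument, in which the link a)$\Leftrightarrow$c) is the closed range theorem, proved directly by Riesz representation instead of being quoted.
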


We say that $\lambda\in \mathbb{C}$ belongs to the \emph{spectrum} $\spec(P)$ of an unbounded linear operator $P:\H\to\H$ if there exists no bounded linear operator $B:\H\to\H$ such that 
\begin{itemize}
    \item[1)] $B(P-\lambda I)x = x $ for all $x \in \mathcal{D}(P)$,
    \item[2)] $Bx\in \mathcal{D}(P)$ and $(P-\lambda I)Bx = x$ for all $x \in \mathcal{H}$;
\end{itemize}
in other words, if $P-\lambda I$ has no bounded inverse. By \cite[Theorem X.1]{RS2}, the spectrum of a self-adjoint operator is a subset of the real axis, and the spectrum of a non-negative self-adjoint operator is a subset of the non-negative real axis. Furthermore, we say that a non-negative self-adjoint operator $P$ has a \emph{spectral gap} if it has the property $\inf(\sigma (P) \backslash \{0\})=C>0$, or equivalently if $\sigma(P)\subseteq\{0\}\cup[C,+\infty)$ with $C>0$. 

We recall the following characterisation of the spectral gap condition. We include a proof for convenience of the reader.

\begin{lemma}\label{lemma spectral gap equiv}
Let $P$ be a non-negative self-adjoint operator on a Hilbert space $\H$. Then the following conditions are equivalent:
\begin{enumerate}[label=\upshape{\alph*)}]
\item $P$ has a spectral gap;
\item $\exists C>0\ \la Px,x\ra\ge C\la x,x\ra$ for all $x\in\D(P)\cap(\ker P)^\perp$;
\item $\im P$ is closed;
\item $P^2$ has a spectral gap.
\end{enumerate}
\end{lemma}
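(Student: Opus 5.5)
The plan is to use the spectral theorem for $P$, realised either via the projection-valued measure $E$ or as a multiplication operator. Since $P$ is non-negative and self-adjoint, there is a measure space $(X,\mu)$, a measurable function $f\ge0$ and a unitary $U:\H\to L^2(X,\mu)$ such that $UPU^{-1}$ is multiplication by $f$ on its maximal domain. In this model $\ker P$ corresponds to functions supported on $\{f=0\}$, and $(\ker P)^\perp$ to functions supported on $\{f>0\}$. Moreover $\spec(P)$ is the essential range of $f$.

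We prove a) $\Leftrightarrow$ b) first.
\begin{itemize}
\item[a) $\Rightarrow$ b)] If $\spec(P)\subseteq\{0\}\cup[C,+\infty)$, then $f\ge C$ almost everywhere on $\{f>0\}$. Hence for $x\in\D(P)\cap(\ker P)^\perp$ we get $\la Px,x\ra=\int f|Ux|^2\,d\mu\ge C\lv x\rv^2$.
\item[b) $\Rightarrow$ a)] Suppose b) holds but some $\lambda\in(0,C)$ lies in the spectrum. Then the spectral projection $E((\lambda-\epsilon,\lambda+\epsilon))$ is nonzero for every small $\epsilon>0$, with $\lambda+\epsilon<C$ and $\lambda-\epsilon>0$. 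A nonzero $x$ in its range lies in $\D(P)\cap(\ker P)^\perp$ and satisfies $\la Px,x\ra\le(\lambda+\epsilon)\lv x\rv^2<C\lv x\rv^2$, which contradicts b).
\end{itemize}

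Next, b) $\Leftrightarrow$ c) via Lemma \ref{lemma im closed} with $P^t=P$. Since $P$ is self-adjoint, $\c{\im P}=(\ker P)^\perp$, so condition b) of Lemma \ref{lemma im closed} reads $\lv x\rv\le C'\lv Px\rv$ on $\D(P)\cap(\ker P)^\perp$.
\begin{itemize}
\item[b) $\Rightarrow$ c)] From b) and Cauchy--Schwarz, $C\lv x\rv^2\le\la Px,x\ra\le\lv Px\rv\lv x\rv$, so $\lv x\rv\le C^{-1}\lv Px\rv$, and $\im P$ is closed.
\item[c) $\Rightarrow$ b)] Closedness of $\im P$ gives $\lv Px\rv\ge c\lv x\rv$ on $\D(P)\cap(\ker P)^\perp$. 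In the multiplication model this says $f^2\ge c^2$ almost everywhere on $\{f>0\}$: otherwise we could test on an indicator function of a positive-measure subset of $\{0<f<c\}$, truncated to finite measure, and violate the inequality. Hence $f\ge c$ there, and b) holds with constant $c$.
\end{itemize}
This spectral detour is the one mildly delicate point, because for a general operator closed image does not directly give the quadratic-form bound. I expect it to be the main (though small) obstacle.

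Finally, a) $\Leftrightarrow$ d). By Theorem \ref{theorem composition with adjoint}, $P^2=P^tP$ is non-negative self-adjoint, and it corresponds to multiplication by $f^2$. Thus $\spec(P^2)=\{\mu^2:\mu\in\spec(P)\}$ by the spectral mapping theorem, or directly from the essential range. Therefore $\spec(P)\subseteq\{0\}\cup[C,\infty)$ if and only if $\spec(P^2)\subseteq\{0\}\cup[C^2,\infty)$, which gives a) $\Leftrightarrow$ d).
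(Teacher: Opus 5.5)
Your proof is correct and follows essentially the same route as the paper. Both rest on the spectral theorem (you use the multiplication-operator model, the paper the spectral resolution $\{E_\lambda\}$), the spectral description of $\ker P$ and $(\ker P)^\perp$, and the spectral mapping theorem for a)$\Leftrightarrow$d); you carry out explicitly the \emph{easy exercises} the paper leaves to the reader, routing b)$\Leftrightarrow$c) through Lemma \ref{lemma im closed}.
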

\begin{proof}
We need the Spectral Theorem for unbounded self-adjoint operators, for which we borrow the notation from \cite[Section A.5.4]{Gri}. Denoting by $\{E_\lambda\}$ the spectral resolution of $P$, for $x,y\in\D(P)$ we compute
\[
\la x,y \ra=\int_{\sigma(P)}d(E_\lambda x,y),
\]
\[
\la Px,y \ra=\int_{\sigma(P)}\lambda d(E_\lambda x,y),
\]
\[
\la Px,Py \ra=\la P^2x,y \ra=\int_{\sigma(P)}\lambda^2 d(E_\lambda x,y).
\]
Notice that $y\in\ker P$ iff
\[
\int_{\sigma(P)\cap(0,+\infty)}d(E_\lambda x,y)=0
\]
for all $x\in\D(P)$, for which
\[
\la x,y\ra=\int_{\sigma(P)\cap\{0\}}d(E_\lambda x,y).
\]
In particular, $x\in(\ker P)^\perp$ iff 
\[
\la x,y\ra=\int_{\sigma(P)\setminus\{0\}}d(E_\lambda x,y)
\]
for all $y\in\D(P)$. Having this established, the claimed equivalences become easy exercises. To show d)$\implies$a) we also make use of the Spectral Mapping Theorem of \cite[Section A.5.4]{Gri}.
\end{proof}

Finally, we recall another characterisation of the spectral gap condition for the Laplacian of a Hilbert complex.

\begin{lemma}[{\cite[Lemma 4.9]{HP}}]\label{lemma spectral gap equiv 2}
Given a Hilbert complex $D_i:\H_i\to\H_{i+1}$ with Laplacian $\Delta_i$, the following conditions are equivalent:
\begin{enumerate}[label=\upshape{\alph*)}]
\item $\Delta_i$ has a spectral gap;
\item $\exists C_i>0\ : C_i\lv x\rv^2_i \le \lv D_{i-1}^tx\rv^2_{i-1}+\lv D_ix\rv^2_{i+1}$ for all $x\in\D(D_{i-1}^t)\cap\D(D_i)\cap(\ker \Delta_i)^\perp$;
\item $\im D_{i-1}$ and $\im D_i$ are closed.
\end{enumerate}
\end{lemma}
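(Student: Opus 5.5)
The plan is to reduce everything to Lemma \ref{lemma decomposition hilbert complexes}, Lemma \ref{lemma im closed} and Lemma \ref{lemma spectral gap equiv}. I will show a)$\Rightarrow$b) and b)$\Rightarrow$c), then prove c)$\Rightarrow$a) directly.

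For a)$\Rightarrow$b), assume $\Delta_i$ has a spectral gap. By Lemma \ref{lemma spectral gap equiv}, $\lv D_ix\rv^2+\lv D_{i-1}^tx\rv^2=\la\Delta_i x,x\ra\ge C\lv x\rv^2$ for $x\in\D(\Delta_i)\cap(\ker\Delta_i)^\perp$. This is not yet the full statement, since b) requires the inequality on the larger form domain $\D(D_i)\cap\D(D_{i-1}^t)\cap(\ker\Delta_i)^\perp$. I will extend it using the spectral theorem. The quadratic form $q(x)=\lv D_ix\rv^2+\lv D_{i-1}^tx\rv^2$ is the closed form associated with $\Delta_i$, so its domain equals $\D(\Delta_i^{1/2})$ and $q(x)=\lv\Delta_i^{1/2}x\rv^2$. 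Verifying this identification is the main technical point. I would do it by noting that $\D(\Delta_i)$ is a core for $q$. Given that, $q(x)=\int\lambda\,d(E_\lambda x,x)\ge C\int d(E_\lambda x,x)=C\lv x\rv^2$ whenever $x\perp\ker\Delta_i$, because the spectral measure of such $x$ lives on $[C,\infty)$.

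For b)$\Rightarrow$c), take $x\in\D(D_i)\cap\c{\im D_i^t}$. By the third decomposition in Lemma \ref{lemma decomposition hilbert complexes}, $\c{\im D_i^t}\subseteq\ker D_{i-1}^t$, so $D_{i-1}^tx=0$. The same lemma gives $\c{\im D_i^t}\perp\ker D_i\cap\ker D_{i-1}^t=\ker\Delta_i$. Applying b) then yields $C\lv x\rv^2\le\lv D_ix\rv^2$. Lemma \ref{lemma im closed} (b$\Rightarrow$a) now shows that $\im D_i$ is closed. Symmetrically, take $y\in\D(D_{i-1}^t)\cap\c{\im D_{i-1}}$. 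Then $D_iy=0$ and $y\perp\ker\Delta_i$, so b) gives $C\lv y\rv^2\le\lv D_{i-1}^ty\rv^2$, and Lemma \ref{lemma im closed} (d$\Rightarrow$a) shows that $\im D_{i-1}$ is closed.

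For c)$\Rightarrow$a), Lemma \ref{lemma im closed} supplies constants with $\lv D_ix\rv\ge c\lv x\rv$ on $\D(D_i)\cap\c{\im D_i^t}$ and $\lv D_{i-1}^ty\rv\ge c\lv y\rv$ on $\D(D_{i-1}^t)\cap\c{\im D_{i-1}}$. Let $x\in\D(\Delta_i)\cap(\ker\Delta_i)^\perp$ and decompose $x=y+z$ with $y\in\c{\im D_{i-1}}$ and $z\in\c{\im D_i^t}$, as in Lemma \ref{lemma decomposition hilbert complexes}. Both summands lie in the right domains: $y\in\ker D_i\subseteq\D(D_i)$, hence $z=x-y\in\D(D_i)$, and similarly $z\in\ker D_{i-1}^t$ gives $y\in\D(D_{i-1}^t)$. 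Moreover $D_ix=D_iz$ and $D_{i-1}^tx=D_{i-1}^ty$. Therefore
\[
\la\Delta_ix,x\ra=\lv D_iz\rv^2+\lv D_{i-1}^ty\rv^2\ge c^2(\lv z\rv^2+\lv y\rv^2)=c^2\lv x\rv^2 .
\]
Lemma \ref{lemma spectral gap equiv} (b$\Rightarrow$a) then gives the spectral gap. The same decomposition argument proves c)$\Rightarrow$b) directly on the form domain, which offers an alternative route to the closed-form issue in the first step.
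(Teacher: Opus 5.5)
The paper does not prove this lemma; it imports it from \cite[Lemma 4.9]{HP}, so there is no in-paper argument to compare with. Your cycle is sound. The steps b)$\Rightarrow$c) and c)$\Rightarrow$a) are complete as written, including your check that $y\in\c{\im D_{i-1}}\subseteq\ker D_i$ and $z\in\c{\im D_i^t}\subseteq\ker D_{i-1}^t$ put both summands in the right domains.

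The one step you assert rather than prove is the identification, in a)$\Rightarrow$b), of $\D(D_i)\cap\D(D_{i-1}^t)$ with $\D(\Delta_i^{1/2})$. It is true, but ``$\D(\Delta_i)$ is a core for $q$'' is the nontrivial content, not something you can simply note. The standard justification is Kato's representation theorems:
\begin{enumerate}
\item $q$ is a closed, densely defined, non-negative form, because $D_i$ and $D_{i-1}^t$ are closed.
\item For $u\in\D(\Delta_i)$ and $v\in\D(q)$ one has $q(u,v)=\la\Delta_iu,v\ra$, using $D_{i-1}^{tt}=D_{i-1}$. So the self-adjoint operator represented by $q$ extends $\Delta_i$, and therefore equals it.
\item The second representation theorem then gives $\D(q)=\D(\Delta_i^{1/2})$ and $q(x)=\lv\Delta_i^{1/2}x\rv^2$.
\end{enumerate}
After this, your spectral estimate goes through.

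Your closing remark about an ``alternative route'' does not by itself remove this issue, because in your chain c) is only reached through b). It becomes a genuine, form-free alternative once you add a direct a)$\Rightarrow$c). This uses exactly the mechanism of the paper's proof of Theorem \ref{theorem closed image del delbar}. By Lemma \ref{lemma spectral gap equiv}, $\im\Delta_i$ is closed, so by Lemma \ref{lemma decomposition hilbert complexes}
\[
\c{\im D_{i-1}}\oplus\c{\im D_i^t}=(\ker\Delta_i)^\perp=\im\Delta_i\subseteq\im D_{i-1}+\im D_i^t .
\]
The orthogonality of the two closures forces $\im D_{i-1}$ and $\im D_i^t$ to be closed, hence $\im D_i$ is closed by Lemma \ref{lemma im closed}. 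Combine this with your decomposition proof of c)$\Rightarrow$b) on the full form domain, and with b)$\Rightarrow$a) by restricting to $\D(\Delta_i)$. That closes the cycle using only results stated in the paper, with no quadratic-form theory needed.
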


\subsection{Strong and weak extensions}

An \emph{extension} of a linear operator between Hilbert spaces $P:\H_1\to\H_2$ is a linear operator $P':\H_1\to\H_2$ such that $\D(P)\subseteq\D(P')$ and $Px=P'x$ for every $x\in\D(P)$, \textit{i.e.}, $\gr(P)\subseteq\gr(P')$. We will use the notation $P\subseteq P'$ to denote such an extension.

Let $(M,g)$ be a Riemannian manifold, $(E_1,h_1)$ and $(E_2,h_2)$ be Hermitian vector bundles on $M$, and let ${P}:\cinf(M,E_1)\to \cinf(M,E_2)$ be a differential operator.  We denote by ${P^*}:\cinf(M,E_2)\to \cinf(M,E_1)$ the \emph{formal adjoint} of $P$.

Denoting by $\cinf_0(M,E_j)$ the space of smooth sections of $E_j$ with compact support, we will indicate the restriction of $P$ to compactly supported sections as $P_0:\cinf_0(M,E_1) \to \cinf_0(M,E_2)$. This can be viewed as an unbounded and densely defined linear operator $P_0:L^2(M,E_1)\to L^2(M,E_2)$, with domain $\cinf_0(M,E_1)$. We recall the constructions of two canonical closed extensions of $P_0$.

The \emph{strong extension} $P_{s}$ (also called the \emph{minimal closed extension} $P_{min}$) is defined by taking the closure of the graph of ${P_0}$. Its domain $\D(P_s)$ is defined as
\begin{equation*}
\{u\in L^2(M,E_1)\,|\,\exists\{u_j\}_{j\in\N}\subset \cinf_0(M,E_1),\ \exists v\in L^2(M,E_2) \text{ s.t.}\ u_j\to u,\ Pu_j\to v\},
\end{equation*}
and $P_s u:=v$ for $u\in\D(P_s)$. By definition, the strong extension is the smallest closed extension of ${P_0}$.

The \emph{weak extension} $P_w$ (also called the \emph{maximal closed extension} $P_{max}$) is defined as the largest extension of ${P_0}$ which acts distributionally. Its domain $\D(P_w)$ is defined as
\begin{equation*}
\{u\in L^2(M,E_1)\,|\,\exists v\in L^2(M,E_2),\ \text{s.t.} \ \la v,w\ra_2=\la u,P^*w\ra_1, \ \forall w\in \cinf_0(M,E_2)\},
\end{equation*}
and $P_w u:=v$ for $u\in\D(P_w)$. Here $\la\cdot, \cdot \ra_i$ denotes the inner product defined on $L^2(M,E_i)$. Note that this is equivalent to saying $P_w$ is the Hilbert adjoint of $P^*$ restricted to smooth forms with compact support, \textit{i.e.}, $((P^*)_0)^t=P_w$. Moreover, the weak extension is an extension of the strong extension, that is $P_s\subseteq P_w$, and every closed extension $P'$ of $P_0$ which acts distributionally (namely, such that $(P')^t$ is an extension of $(P^*)_0$) is contained between the minimal and the maximal closed extensions. 

\begin{remark}\label{remark strong-weak}
Since a closable densely defined operator and its closure have the same adjoint by \cite[Theorem VIII.1]{RS1}, it follows that $((P^*)_s)^t=P_w$. This implies
\begin{equation*}\label{eq strong-weak}
(P^*)_s=(P_w)^t,\ \ \ (P^*)_w=(P_s)^t.
\end{equation*}
\end{remark}

A linear operator $L:\H\to\H$ on a Hilbert space $\H$ is  called \emph{essentially self-adjoint} if it has a unique self-adjoint extension. Equivalently, $L$ is essentially self-adjoint iff the closure of its graph defines a self-adjoint operator \cite[p. 256]{RS1}.

A differential operator $P:\cinf(M,E)\to\cinf(M,E)$ between sections of a Hermitian vector bundle $(E,h)$ is called \emph{formally self-adjoint} if $P=P^*$.

\begin{remark}
If a differential operator $P$ is formally self-adjoint, then, by Remark \ref{remark strong-weak}, $P_0$ is essentially self-adjoint if and only if $P_s=P_w$.
\end{remark}

We provide a number of technical well-known properties of the strong and weak extensions which will be needed in subsequent sections. To do so, we introduce the order relation $s\le w$ between strong and weak extensions, as well as the notation $a'=w$ if $a=s$ and $a'=s$ if $a=w$.

\begin{proposition}[{\cite[Section 5]{HP}}]\label{proposition properties strong weak}
Let ${P}:\cinf(M,E_1)\to\cinf(M,E_2)$ and ${Q}:\cinf(M,E_2)\to\cinf(M,E_3)$ be differential operators between smooth sections of Hermitian vector bundles. Then
\begin{enumerate}
    \item $\c{\im P_0}=\c{\im P_s}$;
    \item $\c{\im (QP)_s}\subseteq \c{\im Q_s}$;
    \item $\ker P_w\subseteq \ker(QP)_w$;
    \item if $QP= 0$, then
    $\c{\im P_a}\subseteq\ker Q_b$ for any $a,b\in\{s,w\}$ with $a\le b$;
    \item if $P_s\subseteq P'\subseteq P_w$, we have ${P} = P'$ when acting on $\cinf(M,E_1)\cap\D(P')$;
    \item there are equalities
$$\D(P_w)\cap\cinf(M,E_1)=\{\alpha\in L^2(M,E_1)\cap \cinf(M,E_1)\,|\,{P}\alpha\in L^2(M,E_1)\},$$ 
$$\ker {P}_w\cap \cinf(M,E_1)=\{\alpha\in L^2(M,E_1)\cap \Gamma(M,E_1)\,|\,{P}\alpha=0\}.$$
\end{enumerate}
\end{proposition}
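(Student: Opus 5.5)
The plan is to prove each item directly from the definitions of $P_s$ (graph closure of $P_0$) and $P_w=((P^*)_0)^t$. The two tools are the formal adjoint identity $(QP)^*=P^*Q^*$ and the integration by parts formula $\la P\alpha,w\ra=\la\alpha,P^*w\ra$. The latter holds for $\alpha$ smooth and $w$ compactly supported, with no integrability needed on $\alpha$ beyond local integrability, since the pairing is supported in $\operatorname{supp} w$. I expect no real obstacle. The only point needing care is keeping track of which closed extension acts in items (4) and (5).

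For (1), $\im P_0\subseteq\im P_s$ gives one inclusion. Conversely, if $u\in\D(P_s)$, then $P_su=\lim Pu_j$ with $u_j\in\cinf_0$, so $\im P_s\subseteq\c{\im P_0}$; taking closures gives equality. For (2), apply (1) to $QP$: we get $\c{\im(QP)_s}=\c{\im (QP)_0}$. Since $(QP)_0 u=Q_0(P_0u)$ with $P_0u\in\cinf_0$, we have $\im(QP)_0\subseteq\im Q_0\subseteq \im Q_s$, and taking closures gives the claim. For (3), let $u\in\ker P_w$ and $w\in\cinf_0(M,E_3)$. Then $\la u,(QP)^*w\ra_1=\la u,P^*(Q^*w)\ra_1=\la P_wu,Q^*w\ra_2=0$, because $Q^*w$ is compactly supported. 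Hence $u\in\D((QP)_w)$ with $(QP)_wu=0$.

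For (4), since $\ker Q_s\subseteq\ker Q_w$ and both kernels are closed (being kernels of closed operators), it suffices to treat $(a,b)=(s,s)$ and $(w,w)$.

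For $(w,w)$: take $u\in\D(P_w)$ and $v=P_wu$. For $w\in\cinf_0(M,E_3)$ we have $\la v,Q^*w\ra=\la u,P^*Q^*w\ra=\la u,(QP)^*w\ra=0$, since $(QP)^*=0$. So $v\in\ker Q_w$.

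For $(s,s)$: take $u\in\D(P_s)$ with $u_j\to u$ and $Pu_j\to P_su$, where $u_j\in\cinf_0$. Then $(Pu_j,QPu_j)=(Pu_j,0)$ lies in the graph of $Q_0$. Passing to the limit in the closure gives $P_su\in\D(Q_s)$ with $Q_sP_su=0$.

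The mixed case $(s,w)$ follows from the $(s,s)$ case via $\ker Q_s\subseteq\ker Q_w$.

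For (5), since $P'\subseteq P_w$ it is enough to show $P_w\alpha=P\alpha$ for smooth $\alpha\in\D(P_w)$. For every $w\in\cinf_0$, $\la P_w\alpha,w\ra=\la\alpha,P^*w\ra=\la P\alpha,w\ra$ by integration by parts. Since $\cinf_0$ is dense, $P_w\alpha=P\alpha$ almost everywhere.

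For (6), the inclusion "$\subseteq$" is given by (5). For "$\supseteq$", let $\alpha$ be smooth and $L^2$ with $P\alpha\in L^2$. The same integration by parts identity shows that $v:=P\alpha$ satisfies the defining relation of $\D(P_w)$, so $\alpha\in\D(P_w)$ with $P_w\alpha=P\alpha$. The kernel statement then follows immediately.
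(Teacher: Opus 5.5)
Your proof is correct. The paper does not prove this proposition itself but cites \cite[Section 5]{HP}, and your item-by-item check is the standard argument: you work directly from $P_s$ as a graph closure and $P_w=((P^*)_0)^t$, using $(QP)^*=P^*Q^*$ and integration by parts against compactly supported test sections.

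One small wording point in (5): $P\alpha$ is only known to be smooth, not $L^2$, so density of $\cinf_0$ in $L^2(M,E_2)$ does not directly give $P_w\alpha=P\alpha$ a.e. Instead, conclude it from the fundamental lemma of the calculus of variations: a locally integrable section orthogonal to all of $\cinf_0(M,E_2)$ vanishes a.e. Equivalently, use density of $\cinf_0(U,E_2)$ in $L^2(U,E_2)$ for each relatively compact open $U$, on which $P\alpha$ is square integrable.
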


\subsection{From a differential complex to a Hilbert complex}
\label{subsection from differential to hilbert complex}
Given a Riemannian manifold $(M,g)$, let us consider some Hermitian vector bundles $(E_i,h_i)$ for $i=1,2,3$ and a complex of differential operators
\[
\cinf(M,E_1)\overset{P}\longrightarrow\cinf(M,E_2)\overset{Q}\longrightarrow\cinf(M,E_3)
\]
\textit{i.e.}, $Q\circ P=0$. It follows from \emph{(4)} of Proposition \ref{proposition properties strong weak} that
\[
L^2(M,E_1)\overset{P_a}\longrightarrow L^2(M,E_2)\overset{Q_b}\longrightarrow L^2(M,E_3)
\]
is a Hilbert complex for any choice of strong and weak extensions $a,b\in\{s,w\}$ such that $a\le b$.

If the above differential complex is defined by more than two operators, say $D_i:\cinf(M,E_i)\to\cinf(M,E_{i+1})$, then a Hilbert complex is defined by extensions $(D_i)_{a_i}:L^2(M,E_i)\to L^2(M,E_{i+1})$ for $a_i\in\{s,w\}$ such that $a_i\le a_{i+1}$ for all $i$.

\subsection{Differential complexes on complex manifolds}\label{subsection complexes}
Given a complex manifold $M$, we recall the following natural complexes defined on complex valued differential forms by the relations
\[
d^2=\del^2=\delbar^2=\del\delbar+\delbar\del=0.
\]
For any choice of integers $k,p,q$ we consider
\[
\dots\longrightarrow A^{k-1}_\C\overset{d}{\longrightarrow} A^{k}_\C\overset{d}{\longrightarrow} A^{k+1}_\C{\longrightarrow}\dots
\]
\[
\dots\longrightarrow A^{p-1,q}\overset{\del}{\longrightarrow} A^{p,q}\overset{\del}{\longrightarrow} A^{p+1,q}{\longrightarrow}\dots
\]
\[
\dots\longrightarrow A^{p,q-1}\overset{\delbar}{\longrightarrow} A^{p,q}\overset{\delbar}{\longrightarrow} A^{p,q+1}{\longrightarrow}\dots
\]
\[
\dots\longrightarrow A^{p-1,q-2}\oplus A^{p-2,q-1}\overset{\delbar\oplus\del}{\longrightarrow} A^{p-1,q-1}\overset{\del\delbar}{\longrightarrow} A^{p,q}\overset{\del+\delbar}{\longrightarrow} A^{p+1,q}\oplus A^{p,q+1}{\longrightarrow}\dots
\]
where $\delbar\oplus\del$ operates on $A^{p-1,q-2}\oplus A^{p-2,q-1}$ as $\delbar$ on $A^{p-1,q-2}$ plus $\del$ on $A^{p-2,q-1}$. By Subsection \ref{subsection from differential to hilbert complex}, once we fix a Hermitian metric on $M$, which induces natural Hermitian metrics on the vector bundles of forms, each of the previous complexes defines a Hilbert complex. We apply Lemma \ref{lemma decomposition hilbert complexes} to these Hilbert complexes and, taking into account of Remark \ref{remark strong-weak}, deduce the following general result.

\begin{theorem}[{\cite[Section 6 and Theorem 7.4]{HP}}]\label{theorem orthogonal decompositions}
Given a Hermitian manifold $(M,g)$, for $a,b\in\{s,w\}$ with $a\le b$, there are $L^2$ orthogonal decompositions
\begin{align*}
L^2\Lambda^k_\C&=\ker d_b\cap\ker d_{a'}^*\overset{\perp}{\oplus}\c{\im d_a}\overset{\perp}{\oplus}\c{\im d_{b'}^*},\\
L^2\Lambda^{p,q}&=\ker \del_b\cap\ker \del_{a'}^*\overset{\perp}{\oplus}\c{\im \del_a}\overset{\perp}{\oplus}\c{\im \del_{b'}^*},\\
L^2\Lambda^{p,q}&=\ker \delbar_b\cap\ker \delbar_{a'}^*\overset{\perp}{\oplus}\c{\im \delbar_a}\overset{\perp}{\oplus}\c{\im \delbar_{b'}^*},\\
L^2\Lambda^{p,q}&=\ker \del\delbar_b\cap\ker (\delbar\oplus\del)_{a'}^*\overset{\perp}{\oplus}\c{\im (\delbar\oplus\del)_a}\overset{\perp}{\oplus}\c{\im \delbar^*\del^*_{b'}},\\
L^2\Lambda^{p,q}&=\ker (\del+\delbar)_b\cap\ker \delbar^*\del^*_{a'}\overset{\perp}{\oplus}\c{\im \del\delbar_a}\overset{\perp}{\oplus}\c{\im (\del+\delbar)_{b'}^*}
\end{align*}
and
\begin{align*}
L^2\Lambda^k_\C\cap\ker d_b&=\ker d_b\cap\ker d_{a'}^*\overset{\perp}{\oplus}\c{\im d_a},\\
L^2\Lambda^{p,q}\cap\ker \del_b&=\ker \del_b\cap\ker \del_{a'}^*\overset{\perp}{\oplus}\c{\im \del_a},\\
L^2\Lambda^{p,q}\cap\ker \delbar_b&=\ker \delbar_b\cap\ker \delbar_{a'}^*\overset{\perp}{\oplus}\c{\im \delbar_a},\\
L^2\Lambda^{p,q}\cap\ker \del\delbar_b&=\ker \del\delbar_b\cap\ker (\delbar\oplus\del)_{a'}^*\overset{\perp}{\oplus}\c{\im (\delbar\oplus\del)_a},\\
L^2\Lambda^{p,q}\cap\ker (\del+\delbar)_b&=\ker (\del+\delbar)_b\cap\ker \delbar^*\del^*_{a'}\overset{\perp}{\oplus}\c{\im \del\delbar_a}.
\end{align*}
\end{theorem}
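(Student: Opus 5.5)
The statement to prove is Theorem \ref{theorem orthogonal decompositions}. My plan is to obtain every line from Lemma \ref{lemma decomposition hilbert complexes}, applied to a suitable three-term Hilbert complex built from the differential complexes of Subsection \ref{subsection complexes}. Then I translate Hilbert adjoints of strong/weak extensions into strong/weak extensions of formal adjoints via Remark \ref{remark strong-weak}.

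\textbf{The de Rham line.} Fix $a\le b$ in $\{s,w\}$ and consider
\[
L^2\Lambda^{k-1}_\C\overset{d_a}{\longrightarrow}L^2\Lambda^k_\C\overset{d_b}{\longrightarrow}L^2\Lambda^{k+1}_\C .
\]
By Subsection \ref{subsection from differential to hilbert complex}, which rests on (4) of Proposition \ref{proposition properties strong weak}, this is a Hilbert complex, since $d^2=0$ and $a\le b$. Lemma \ref{lemma decomposition hilbert complexes} at the middle term gives
\[
L^2\Lambda^k_\C=\ker d_b\cap\ker (d_a)^t\oplus\c{\im d_a}\oplus\c{\im (d_b)^t}.
\]
It also gives $\ker d_b=\ker d_b\cap\ker(d_a)^t\oplus\c{\im d_a}$. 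Remark \ref{remark strong-weak} gives $(d_s)^t=(d^*)_w$ and $(d_w)^t=(d^*)_s$, that is, $(d_a)^t=d^*_{a'}$ and $(d_b)^t=d^*_{b'}$. Substituting these yields the first decomposition and the corresponding kernel decomposition. The $\del$ and $\delbar$ lines are identical, using $\del^2=\delbar^2=0$ on the bidegree complexes.

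\textbf{The Aeppli and Bott--Chern lines.} Here I use the three-term pieces of the mixed complex:
\[
L^2\Lambda^{p-1,q-2}\oplus L^2\Lambda^{p-2,q-1}\overset{(\delbar\oplus\del)_a}{\longrightarrow}L^2\Lambda^{p-1,q-1}\overset{(\del\delbar)_b}{\longrightarrow}L^2\Lambda^{p,q}
\]
(with indices shifted to land at $(p,q)$), and
\[
L^2\Lambda^{p-1,q-1}\overset{(\del\delbar)_a}{\longrightarrow}L^2\Lambda^{p,q}\overset{(\del+\delbar)_b}{\longrightarrow}L^2\Lambda^{p+1,q}\oplus L^2\Lambda^{p,q+1}.
\]
The required identities are $\del\delbar\circ(\delbar\oplus\del)=0$ and $(\del+\delbar)\circ\del\delbar=0$. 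They follow from $\delbar^2=\del^2=0$ and $\del\delbar=-\delbar\del$, so these are Hilbert complexes. The formal adjoint of $\del\delbar$ is $\delbar^*\del^*$, and the formal adjoints of $\delbar\oplus\del$ and $\del+\delbar$ are the corresponding componentwise operators. Remark \ref{remark strong-weak} then turns the Hilbert adjoints into $\delbar^*\del^*_{a'}$, $(\delbar\oplus\del)^*_{a'}$, $\delbar^*\del^*_{b'}$ and $(\del+\delbar)^*_{b'}$. Lemma \ref{lemma decomposition hilbert complexes} then gives the last two decompositions and their kernel versions.

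\textbf{The one step needing care.} There is no real analytic obstacle. The only point to check is that strong/weak extensions of the direct-sum operators $\delbar\oplus\del$ and $\del+\delbar$ behave correctly with respect to adjoints. I would check this directly from the definitions. $(\delbar\oplus\del)_0$ is a differential operator between sections of the bundle $\Lambda^{p-1,q-2}\oplus\Lambda^{p-2,q-1}$ and $\Lambda^{p-1,q-1}$, so Remark \ref{remark strong-weak} applies verbatim to it as a single differential operator. The same holds for $\del+\delbar$ into $\Lambda^{p+1,q}\oplus\Lambda^{p,q+1}$. With this observation the proof is just bookkeeping of the indices $a,a',b,b'$.
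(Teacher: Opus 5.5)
Your proposal is correct and follows the same route as the paper. You apply Lemma \ref{lemma decomposition hilbert complexes} at the middle term of the three-term Hilbert complexes from Subsection \ref{subsection from differential to hilbert complex}, where $a\le b$ guarantees $\c{\im P_a}\subseteq\ker Q_b$, and convert Hilbert adjoints via Remark \ref{remark strong-weak}, $(P_a)^t=(P^*)_{a'}$. Treating $\delbar\oplus\del$ and $\del+\delbar$ as single differential operators between direct-sum bundles is exactly the bookkeeping the paper leaves implicit.
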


We end this subsection by recalling from \cite[Subsection 7.1]{HP} that on $L^2\Lambda^{p,q}$ it holds
\begin{align}
\ker(\del+\delbar)_w&=\ker\del_w\cap\ker\delbar_w,&\c{\im(\delbar\oplus\del)_s}&=\c{\im\del_s}+\c{\im\delbar_s},\\
\ker(\del^*+\delbar^*)_w&=\ker\del^*_w\cap\ker\delbar^*_w,&\c{\im(\del^*\oplus\delbar^*)_s}&=\c{\im\del^*_s}+\c{\im\delbar^*_s},\label{equation kernel image del* e delbar*}
\end{align}
where $\del^*\oplus\delbar^*=(\del+\delbar)^*:A^{p+1,q}\oplus A^{p,q+1}\to A^{p,q}$ and $\del^*+\delbar^*=(\delbar\oplus\del)^*:A^{p,q}\to A^{p-1,q}\oplus A^{p,q-1}$.

\subsection{Self-adjoint extensions of second- and fourth-order complex Laplacians}\label{subsection self adjoint laplacians}

Given a Hermitian manifold $(M,g)$, we recall the constructions of self-adjoint extensions of the complex Laplacians from \cite[Sections 6,7,8]{HP}. 

The self-adjoint extensions of the Dolbeault Laplacian are simply given by the natural Laplacians associated with the Hilbert complex (see Remarks \ref{remark laplacian hilbert complex} and \ref{remark strong-weak}) arising from the Dolbeault complex: for $a,b\in\{s,w\}$ with $a\le b$
\[
\Delta_{\delbar,ab}:=\delbar_a\delbar^*_{a'}+\delbar^*_{b'}\delbar_b:L^2\Lambda^{p,q}\to L^2\Lambda^{p,q}.
\]
Similarly, we can build self-adjoint extensions
\begin{align*}
\Delta_{\del,ab}&:=\del_a\del^*_{a'}+\del^*_{b'}\del_b:L^2\Lambda^{p,q}\to L^2\Lambda^{p,q},\\
\Delta_{d,ab}&:=d_ad^*_{a'}+d^*_{b'}d_b:L^2\Lambda^{k}_\C\to L^2\Lambda^{k}_\C,\\
\Delta_{A,ab}&:=(\delbar\oplus\del)_a(\delbar\oplus\del)^*_{a'}+\delbar^*\del^*_{b'}\del\delbar_b:L^2\Lambda^{p,q}\to L^2\Lambda^{p,q},\\
\Delta_{BC,ab}&:=\del\delbar_a\delbar^*\del^*_{a'}+(\del+\delbar)^*_{b'}(\del+\delbar)_b:L^2\Lambda^{p,q}\to L^2\Lambda^{p,q},
\end{align*}
of the $\del$- and Hodge-de Rham Laplacians, and of the non-elliptic natural Aeppli and Bott-Chern Laplacians.

The self-adjoint extensions of the elliptic Aeppli and Bott-Chern Laplacians are defined as follows. We begin defining extensions of the Varouchas Laplacians \cite{V}. 
For $\{a,b,c\}\in\{s,w\}$ with $a\le b\le c$, we define on $L^2\Lambda^{p,q}$ the following self-adjoint operators
\begin{align*}
\square_{A,ab}&:=((\delbar\oplus\del)_a(\delbar\oplus\del)^*_{a'})^2+\delbar^*\del^*_{b'}\del\delbar_b,\\
\square_{BC,bc}&:=\del\delbar_b\delbar^*\del^*_{b'}+((\del+\delbar)^*_{c'}(\del+\delbar)_c)^2,
\end{align*}
which are non-negative and self-adjoint with the domains given by repeatedly applying Theorems \ref{theorem composition with adjoint} and \ref{theorem sum of self ajoint}. They are self-adjoint extensions of the elliptic Aeppli and Bott-Chern Laplacians $(\square_{A})_0$, $(\square_{BC})_0$.

We then define extensions of the Kodaira-Spencer Laplacians \cite{KS}. 
For $\{a,b,c\}\in\{s,w\}$ with $a\le b\le c$, we define on $L^2\Lambda^{p,q}$ the following self-adjoint operators
\begin{align*}
\tilde\Delta_{A,ab}:=& \delbar^*\del^*_{b'}\del\delbar_{b}+
\del\delbar_s\delbar^*\del^*_w+\delbar\del^*_s\del\delbar^*_w+ \del\delbar^*_s\delbar\del^*_w +
(\delbar \oplus \del)_a(\delbar \oplus \del)^*_{a'},\\
\tilde\Delta_{BC,bc}:=&\del\delbar_{b}\delbar^*\del^*_{b'}+
\delbar^*\del^*_s\del\delbar_w+
\del^*\delbar_s\delbar^*\del_w+\delbar^*\del_s\del^*\delbar_w+
(\del+\delbar)^*_{c'}(\del+\delbar)_c,
\end{align*}
which are non-negative and self-adjoint with the domains given by repeatedly applying Theorems \ref{theorem composition with adjoint} and \ref{theorem sum of self ajoint}. They are self-adjoint extensions of the elliptic Aeppli and Bott-Chern Laplacians $(\tilde\Delta_{A})_0$, $(\tilde\Delta_{BC})_0$.

A simple computation and the use of \emph{(3)} of Proposition \ref{proposition properties strong weak} allow us to describe the kernels of the above operators:
\begin{align*}
&\ker\Delta_{A,ab}=\ker\square_{A,ab}=\ker\tilde\Delta_{A,ab}= \ker\del\delbar_{b}\cap\ker(\del^*+\delbar^*)_{a'},\\
&\ker\Delta_{BC,bc}=\ker\square_{BC,bc}=\ker\tilde\Delta_{BC,bc}=\ker\delbar^*\del^*_{b'}\cap\ker(\del+\delbar)_c.
\end{align*}

We also need to define self-adjoint extensions of just the fourth-order part of the elliptic Aeppli and Bott-Chern Laplacians $(\tilde\Delta_{A})_0$, $(\tilde\Delta_{BC})_0$. For $b\in\{s,w\}$, we define
\begin{align*}
\tilde\Delta_{A,b,4}:=& \delbar^*\del^*_{b'}\del\delbar_{b}+
\del\delbar_s\delbar^*\del^*_w+\delbar\del^*_s\del\delbar^*_w+ \del\delbar^*_s\delbar\del^*_w,\\
\tilde\Delta_{BC,b,4}:=&\del\delbar_{b}\delbar^*\del^*_{b'}+
\delbar^*\del^*_s\del\delbar_w+
\del^*\delbar_s\delbar^*\del_w+\delbar^*\del_s\del^*\delbar_w,
\end{align*}
which are non-negative and self-adjoint with the domains given by repeatedly applying Theorems \ref{theorem composition with adjoint} and \ref{theorem sum of self ajoint}.

\subsection{Complete K\"ahler metrics}\label{subsection complete kahler}

Given a complete Hermitian manifold $(M,g)$, it is well known that $\delta_s=\delta_w$ for the first-order operators $\delta=d,\del,\delbar,\delbar\oplus\del$ and their formal adjoints. We refer to \cite[Lemma 4]{AV} for the proof when $\delta=\delbar$; the other cases are similar. In the rest of the paper, under the assumption of completeness, we will write either $\delta_s$ or $\delta_w$ depending on which is more convenient at the time.
Moreover,  all positive integer powers of $(d+d^*)_0,(\del+\del^*)_0,(\delbar+\delbar^*)_0$ as operators $A^{\bullet,\bullet}_0\to A^{\bullet,\bullet}_0$ are essentially self-adjoint \cite[Section 3.B]{Che}. For example, $\Delta_{\delbar,sw}$ is the unique self-adjoint extension of $(\Delta_\delbar)_0=(\delbar+\delbar^*)_0^2$ and $(\Delta_{\delbar,sw})^2$ (which can be defined via Theorem \ref{theorem composition with adjoint}) is the unique self-adjoint extension of $(\Delta_\delbar)^2_0=(\delbar+\delbar^*)_0^4$.

Given a K\"ahler manifold $(M,g)$, thanks to the K\"ahler identity $\del\delbar^*+\delbar^*\del=0$, we have
\[
\Delta_\delbar=\Delta_\del=\frac12\Delta_d
\]
and
\begin{align*}
\tilde\Delta_A&=\Delta_\delbar\Delta_\delbar+\del\del^*+\delbar\delbar^*,\\
\tilde\Delta_{BC}&=\Delta_\delbar\Delta_\delbar+\del^*\del+\delbar^*\delbar.
\end{align*}
Therefore, given a complete K\"ahler manifold $(M,g)$, it follows
\[
\Delta_{\delbar,sw}=\Delta_{\del,sw}=\frac12\Delta_{d,sw}
\]
and furthermore for any $b\in\{s,w\}$
\begin{align}\label{equation square dolbeault aeppli bc 4}
\tilde\Delta_{A,b,4}=(\Delta_{\delbar,sw})^2=\tilde\Delta_{BC,b,4}
\end{align}
by the essential self-adjointness of $(\Delta_\delbar)_0^2$.

As a consequence of this discussion, we recall that on a complete K\"ahler manifold all the kernels in $L^2$ of the above defined Laplacians coincide.

\begin{theorem}[{\cite[Theorem 8.4]{HP}}]\label{theorem complete kahler equality harmonic}
Given a complete K\"ahler manifold $(M,g)$, for all $b\in\{s,w\}$ the following equalities hold on $L^2\Lambda^{p,q}$ and $L^2\Lambda^{k}_\C$
\begin{align*}
\ker d_w\cap\ker d^*_w&=\ker \del_w\cap\ker \del^*_w=\ker \delbar_w\cap\ker \delbar^*_w\\
&=\ker\del\delbar_b\cap\ker\del^*_w\cap\ker\delbar^*_w\\
&=\ker\delbar^*\del^*_b\cap\ker\del_w+\ker\delbar_w.
\end{align*}
\end{theorem}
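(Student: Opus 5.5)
The plan is to reduce every equality to a statement about kernels of non-negative self-adjoint operators. The one elementary fact used throughout is this: if $P,Q$ are non-negative and self-adjoint and $x\in\D(P)\cap\D(Q)$ satisfies $(P+Q)x=0$, then $\la Px,x\ra+\la Qx,x\ra=0$. Both terms vanish, so $\lv P^{1/2}x\rv=0$ and hence $Px=0$. Therefore $\ker(P+Q)=\ker P\cap\ker Q$ for the sum defined as in Theorem \ref{theorem sum of self ajoint}. Likewise, by Lemma \ref{lemma spectral gap equiv}'s spectral-calculus argument or directly, $\ker P^2=\ker P$ for $P$ self-adjoint. Since the metric is complete, I will freely identify $\delta_s=\delta_w$ for the first-order operators, as recalled in Subsection \ref{subsection complete kahler}. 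I read the last line of the statement as $\ker\delbar^*\del^*_b\cap\ker\del_w\cap\ker\delbar_w$, which by the identities of Subsection \ref{subsection complexes} is $\ker\delbar^*\del^*_b\cap\ker(\del+\delbar)_w$.

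\textbf{First line.} By Remark \ref{remark laplacian hilbert complex}, $\ker\Delta_{d,sw}=\ker d_w\cap\ker d^*_w$, $\ker\Delta_{\del,sw}=\ker\del_w\cap\ker\del^*_w$ and $\ker\Delta_{\delbar,sw}=\ker\delbar_w\cap\ker\delbar^*_w$. By the Kähler identities and completeness, $\Delta_{\delbar,sw}=\Delta_{\del,sw}=\tfrac12\Delta_{d,sw}$ as self-adjoint operators, so these kernels coincide. Call this common space $\H$. In particular every element of $\H$ is annihilated by $\del_w,\delbar_w,\del^*_w,\delbar^*_w$.

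\textbf{Aeppli line.} Take $a=s$ (so $a'=w$) and $b$ arbitrary. By definition, $\tilde\Delta_{A,sb}=\tilde\Delta_{A,b,4}+(\delbar\oplus\del)_s(\delbar\oplus\del)^*_w$, a sum of non-negative self-adjoint operators. By \eqref{equation square dolbeault aeppli bc 4}, $\tilde\Delta_{A,b,4}=(\Delta_{\delbar,sw})^2$. The elementary fact then gives
\[
\ker\tilde\Delta_{A,sb}=\ker(\Delta_{\delbar,sw})^2\cap\ker(\delbar\oplus\del)^*_w=\H\cap\ker\del^*_w\cap\ker\delbar^*_w=\H,
\]
using \eqref{equation kernel image del* e delbar*} for the middle equality. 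On the other hand, the kernel computation of Subsection \ref{subsection self adjoint laplacians} gives $\ker\tilde\Delta_{A,sb}=\ker\del\delbar_b\cap\ker(\del^*+\delbar^*)_w=\ker\del\delbar_b\cap\ker\del^*_w\cap\ker\delbar^*_w$. This proves the third equality.

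\textbf{Bott--Chern line.} This is symmetric. Set $\tilde\Delta_{BC,bw}=\tilde\Delta_{BC,b,4}+(\del+\delbar)^*_s(\del+\delbar)_w$, where $\tilde\Delta_{BC,b,4}=(\Delta_{\delbar,sw})^2$ by \eqref{equation square dolbeault aeppli bc 4}. The elementary fact gives $\ker\tilde\Delta_{BC,bw}=\H\cap\ker\del_w\cap\ker\delbar_w=\H$. The recalled kernel formula then identifies this space with $\ker\delbar^*\del^*_{b'}\cap\ker(\del+\delbar)_w$. Since $b$ ranges over $\{s,w\}$, so does $b'$.

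The main point to check carefully is the domain bookkeeping. The sums must be understood exactly as in Theorem \ref{theorem sum of self ajoint}, so that the kernel-of-a-sum argument applies. The identification $\tilde\Delta_{A,b,4}=(\Delta_{\delbar,sw})^2$ must hold as an equality of self-adjoint operators, including domains. That identification rests on the essential self-adjointness of $(\Delta_\delbar)_0^2$ from \cite{Che}, and it is the only place where completeness enters nontrivially. Once it is in place, everything else is formal.
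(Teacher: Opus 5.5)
Your proof is correct and takes the route the paper indicates. The paper introduces the theorem as a consequence of three ingredients: $\Delta_{\delbar,sw}=\Delta_{\del,sw}=\frac12\Delta_{d,sw}$, the identity $\tilde\Delta_{A,b,4}=(\Delta_{\delbar,sw})^2=\tilde\Delta_{BC,b,4}$, and the general kernel formulas for $\tilde\Delta_{A,ab}$ and $\tilde\Delta_{BC,bc}$. You combine exactly these via the kernel-of-a-sum argument, and you rightly read the last line of the statement as an intersection.

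One small correction: your closing claim that the fourth-order identity is the only place completeness enters is inaccurate. Completeness is equally needed for $\delta_s=\delta_w$ and for identifying the second-order Laplacians as self-adjoint operators.
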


An immediate corollary is the following

\begin{corollary}[{\cite[Corollary 8.7]{HP}}]\label{corollary complete kahler kernel image deldelbar}
Given a complete K\"ahler manifold $(M,g)$
\[
\ker\del\delbar_s=\ker\del\delbar_w,\ \ \ \c{\im\del\delbar_s}=\c{\im\del\delbar_w},
\]
\[
\ker\delbar^*\del^*_s=\ker\delbar^*\del^*_w,\ \ \ \c{\im\delbar^*\del^*_s}=\c{\im\delbar^*\del^*_w}.
\]
\end{corollary}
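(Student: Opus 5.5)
We have to prove Corollary \ref{corollary complete kahler kernel image deldelbar}. The idea is to deduce the kernel equalities from Theorem \ref{theorem complete kahler equality harmonic} together with the orthogonal decompositions of Theorem \ref{theorem orthogonal decompositions}, and then obtain the image equalities by taking orthogonal complements via Remark \ref{remark strong-weak}.

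\emph{Kernels.} Since $\del\delbar_s\subseteq\del\delbar_w$, the inclusion $\ker\del\delbar_s\subseteq\ker\del\delbar_w$ is automatic. For the reverse inclusion, apply the fourth decomposition of Theorem \ref{theorem orthogonal decompositions} with $a=s$ and $b\in\{s,w\}$:
\[
\ker\del\delbar_b=\ker\del\delbar_b\cap\ker(\delbar\oplus\del)^*_{w}\overset{\perp}{\oplus}\c{\im(\delbar\oplus\del)_s}.
\]
By completeness, $(\delbar\oplus\del)_s=(\delbar\oplus\del)_w$, so the second summand does not depend on $b$. By \eqref{equation kernel image del* e delbar*},
\[
\ker(\delbar\oplus\del)^*_w=\ker(\del^*+\delbar^*)_w=\ker\del^*_w\cap\ker\delbar^*_w .
\]
Hence the first summand is $\ker\del\delbar_b\cap\ker\del^*_w\cap\ker\delbar^*_w$. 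Theorem \ref{theorem complete kahler equality harmonic} says this space equals $\ker\delbar_w\cap\ker\delbar^*_w$ for both values of $b$, so it too is independent of $b$. Both summands therefore coincide for $b=s$ and $b=w$, which gives $\ker\del\delbar_s=\ker\del\delbar_w$.

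The same argument handles $\delbar^*\del^*$, using the fifth decomposition with $b\in\{s,w\}$:
\[
\ker\delbar^*\del^*_{b}=\ker\delbar^*\del^*_b\cap\ker(\del+\delbar)_w\overset{\perp}{\oplus}\c{\im(\del+\delbar)^*_s}.
\]
Here we need the version of that decomposition with the roles of the operators swapped. This comes from applying Lemma \ref{lemma decomposition hilbert complexes} to the adjoint Hilbert complex, i.e.\ the third line of that lemma. We then use $\ker(\del+\delbar)_w=\ker\del_w\cap\ker\delbar_w$ and the last equality of Theorem \ref{theorem complete kahler equality harmonic}, which is the Bott–Chern harmonic space. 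I read the final "$+$" in that theorem as an intersection.

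\emph{Images.} By Remark \ref{remark strong-weak}, $(\del\delbar_s)^t=(\delbar^*\del^*)_w$ and $(\del\delbar_w)^t=(\delbar^*\del^*)_s$. Using $\c{\im P}=(\ker P^t)^\perp$, we get
\[
\c{\im\del\delbar_s}=(\ker\delbar^*\del^*_w)^\perp=(\ker\delbar^*\del^*_s)^\perp=\c{\im\del\delbar_w}.
\]
The equality for $\c{\im\delbar^*\del^*}$ follows in the same way from $\ker\del\delbar_s=\ker\del\delbar_w$.

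The only delicate point is bookkeeping. One must check that each decomposition invoked is valid for both choices $b=s,w$, with the complementary image term genuinely independent of $b$. That independence is exactly where completeness, $\delta_s=\delta_w$ for first-order $\delta$, is needed.
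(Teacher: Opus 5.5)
Your proof is correct and is essentially the argument the paper intends when it calls this an immediate corollary of Theorem \ref{theorem complete kahler equality harmonic}. The harmonic summands in the decompositions of $\ker\del\delbar_b$ and $\ker\delbar^*\del^*_b$ (Theorem \ref{theorem orthogonal decompositions} and the third line of Lemma \ref{lemma decomposition hilbert complexes}) are independent of $b$, the image equalities follow by orthogonal complements via Remark \ref{remark strong-weak}, and you are right to read the final ``$+$'' as an intersection. One small correction to your closing remark: with $a=s$ fixed, the summands $\c{\im(\delbar\oplus\del)_s}$ and $\c{\im(\del+\delbar)^*_s}$ are trivially independent of $b$, so completeness is not needed there and enters only through Theorem \ref{theorem complete kahler equality harmonic}.
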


We conclude this section by providing an alternative presentation and proof of \cite[Corollary 8.6]{HP}. A similar strategy will be used in proving the main result Theorem \ref{theorem l2 del delbar lemma}.

\begin{theorem}\label{theorem reduced l2 del delbar lemma}
Given a complete K\"ahler manifold $(M,g)$, in both the spaces $L^2\Lambda^k_\C$ and $L^2\Lambda^{p,q}$ the following equalities hold true
\begin{align*}
\c{\im \del\delbar_s}
&=\c{\im \del_s}\cap\ker\delbar_w=\c{\im \delbar_s}\cap\ker\del_w\\
&=(\c{\im\del_s}+\c{\im\delbar_s})\cap\ker\del_w\cap\ker\delbar_w\\
&=\c{\im d_s}\cap\ker\del_w\cap\ker\delbar_w.
\end{align*}
\end{theorem}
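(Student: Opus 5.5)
We prove the chain of equalities by showing that each space on the right is contained in $\c{\im\del\delbar_s}$; the reverse inclusions are essentially formal.

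\textbf{Easy inclusions.} By \emph{(2)} of Proposition \ref{proposition properties strong weak}, since $\del\delbar=\del\circ\delbar=-\delbar\circ\del$, we get $\c{\im\del\delbar_s}\subseteq\c{\im\del_s}$ and $\c{\im\del\delbar_s}\subseteq\c{\im\delbar_s}$. By \emph{(4)}, $\c{\im\del\delbar_s}\subseteq\ker\del_w\cap\ker\delbar_w$. Similarly $\del\delbar=\frac12 d\circ(\delbar-\del)$ up to sign, since $d(\delbar-\del)=\del\delbar-\delbar\del=2\del\delbar$. Hence $\c{\im\del\delbar_s}\subseteq\c{\im d_s}$. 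So $\c{\im\del\delbar_s}$ lies in all four right-hand spaces.

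\textbf{Reverse inclusions via decomposition.} Take $\alpha$ in the largest candidate space, \emph{e.g.} $\alpha\in\c{\im d_s}\cap\ker\del_w\cap\ker\delbar_w$ (resp.\ $(\c{\im\del_s}+\c{\im\delbar_s})\cap\ker\del_w\cap\ker\delbar_w$). Working bidegree by bidegree (the projections to $L^2\Lambda^{p,q}$ preserve all these spaces, and $\c{\im d_s}$-components are handled because $\del\alpha=\delbar\alpha=0$ splits by bidegree), use the Bott–Chern decomposition of Theorem \ref{theorem orthogonal decompositions} with $a=s$, $b=w$:
\[
\ker(\del+\delbar)_w=\bigl(\ker(\del+\delbar)_w\cap\ker\delbar^*\del^*_{w}\bigr)\oplus\c{\im\del\delbar_s}.
\]
Since $\ker(\del+\delbar)_w=\ker\del_w\cap\ker\delbar_w$, we can write $\alpha=h+\beta$ with $\beta\in\c{\im\del\delbar_s}$. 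Here $h\in\ker\del_w\cap\ker\delbar_w\cap\ker\delbar^*\del^*_w$, which by Theorem \ref{theorem complete kahler equality harmonic} equals $\ker d_w\cap\ker d^*_w=\ker\del_w\cap\ker\del_w^*=\ker\delbar_w\cap\ker\delbar^*_w$. It remains to show $h=0$. Because $h=\alpha-\beta$ and $\beta$ lies in every candidate space, $h$ lies in the same candidate space as $\alpha$.

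\textbf{Killing the harmonic part.} A harmonic $h$ is orthogonal to $\c{\im d_s}$, to $\c{\im\del_s}$ and to $\c{\im\delbar_s}$. This follows from $h\in\ker d^*_w=(\im d_s)^\perp$, and analogously with Remark \ref{remark strong-weak} for $\del$ and $\delbar$ using completeness. So if $h\in\c{\im d_s}$, or $h\in\c{\im\del_s}$, or $h\in\c{\im\delbar_s}$, then $\la h,h\ra=0$. For the sum case, $h\perp\c{\im\del_s}+\c{\im\delbar_s}$, so again $h=0$. Therefore $\alpha=\beta\in\c{\im\del\delbar_s}$.

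This yields all inclusions. The only delicate point is the bidegree reduction on $L^2\Lambda^k_\C$. Since $\del_w$, $\delbar_w$ and the images respect the $(p,q)$-splitting in $L^2$, we argue componentwise; for $d$, the statement $h\perp\c{\im d_s}$ holds directly on $L^2\Lambda^k_\C$, so the bidegree splitting is needed only for the Bott–Chern decomposition.
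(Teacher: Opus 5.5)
Your proof is correct and is essentially the paper's argument: the paper intersects the orthogonal decompositions of Theorem \ref{theorem orthogonal decompositions} with $\ker\del_w\cap\ker\delbar_w$ and uses Theorem \ref{theorem complete kahler equality harmonic} to show that each candidate space is the orthogonal complement of the common harmonic space there. Your Bott--Chern splitting, followed by killing the harmonic part via $\ker D^*_w=(\im D_s)^\perp$, does the same thing element by element. One small inaccuracy is that projection to $L^2\Lambda^{p,q}$ does not preserve $\c{\im d_s}$, but, as your last paragraph observes, you only need the bidegree splitting for the Bott--Chern step, so the argument is unaffected.
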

\begin{proof}
Let us intersect all the orthogonal decompositions of the second half of Theorem \ref{theorem orthogonal decompositions} with the space $\ker\del_w\cap\ker\delbar_w$, taking into account the assumption of completeness. Using Proposition \ref{proposition properties strong weak}, Theorem \ref{theorem complete kahler equality harmonic} and Corollary \ref{corollary complete kahler kernel image deldelbar}, we obtain
\begin{align*}
L^2\Lambda^k_\C\cap\ker\del_w\cap\ker\delbar_w&=\ker \delbar_w\cap\ker \delbar^*_w\overset{\perp}{\oplus}\c{\im d_s}\cap\ker\del_w\cap\ker\delbar_w,\\
L^2\Lambda^{p,q}\cap\ker\del_w\cap\ker\delbar_w&=\ker \delbar_w\cap\ker \delbar^*_w\overset{\perp}{\oplus}\c{\im \del_s}\cap\ker\delbar_w,\\
&=\ker \delbar_w\cap\ker \delbar^*_w\overset{\perp}{\oplus}\c{\im \delbar_s}\cap\ker\del_w,\\
&=\ker \delbar_w\cap\ker \delbar^*_w\overset{\perp}{\oplus}(\c{\im\del_s}+\c{\im\delbar_s})\cap\ker\del_w\cap\ker\delbar_w,\\
&=\ker \delbar_w\cap\ker \delbar^*_w\overset{\perp}{\oplus}\c{\im \del\delbar_s}.
\end{align*}
The last four decompositions yield the first two lines of the statement in $L^2\Lambda^{p,q}$. The same decompositions hold identically when the space to be decomposed is $L^2\Lambda^k_\C\cap\ker\del_w\cap\ker\delbar_w$: it follows by taking the direct sum for $p+q=k$.
All the claimed equalities in $L^2\Lambda^k_\C$ follow immediately. The remaining equality in $L^2\Lambda^{p,q}$ can be easily checked by hand.
\end{proof}

\section{Spectral gap of the elliptic Aeppli and Bott-Chern Laplacians}\label{section spectral gap}

In \cite[Corollary 8.12]{HP} it was proved that on a complete K\"ahler manifold, if the unique self-adjoint extension of the Dolbeault Laplacian $\Delta_{\delbar,sw}$ has a spectral gap, then also the naturally defined self-adjoint extensions of the non-elliptic Aeppli and Bott-Chern Laplacians $\Delta_{A,ab}$, $\Delta_{BC,ab}$, for $a,b\in\{s,w\}$ with $a\le b$, have a spectral gap. In this section, we show that the same holds also for the natural self-adjoint extensions of the elliptic Aeppli and Bott-Chern Laplacians. This holds for both the Kodaira-Spencer and the Varouchas Laplacians.

We begin by proving the spectral gap of the Kodaira-Spencer Laplacians. Since we know $\delta_s=\delta_w$ for the first-order operators $\delta=\del+\delbar,\delbar\oplus\del$ and their formal adjoints, it is enough to prove the spectral gap of $\tilde\Delta_{A,sb}$, $\tilde\Delta_{BC,bw}$, for $b\in\{s,w\}$. The same observation holds also for the Varouchas Laplacians.

\begin{theorem}\label{theorem spectral gap kodaira spencer laplacians}
Let $(M,g)$ be a complete K\"ahler manifold. If $\Delta_{\delbar,sw}$ has a spectral gap in $L^2\Lambda^{p,q}$, then for $b\in\{s,w\}$ the operators $\tilde\Delta_{A,sb}$ and $\tilde\Delta_{BC,bw}$ have a spectral gap in $L^2\Lambda^{p,q}$.
\end{theorem}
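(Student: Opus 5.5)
The plan is to reduce the statement to the spectral gap of $(\Delta_{\delbar,sw})^2$ by means of the characterisation b) in Lemma \ref{lemma spectral gap equiv}. I treat $\tilde\Delta_{A,sb}$ in detail; $\tilde\Delta_{BC,bw}$ is handled in exactly the same way.

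First, I rewrite the operator. By construction, $\tilde\Delta_{A,sb}$ is the sum of five non-negative self-adjoint operators, with domain the intersection of their domains. The domain of a sum of self-adjoint operators built via Theorem \ref{theorem sum of self ajoint} is just the intersection of all the individual domains. So I may group the four fourth-order terms together and obtain, as operators with equal domains,
\[
\tilde\Delta_{A,sb}=\tilde\Delta_{A,b,4}+(\delbar\oplus\del)_s(\delbar\oplus\del)^*_w .
\]
By \eqref{equation square dolbeault aeppli bc 4}, on a complete K\"ahler manifold $\tilde\Delta_{A,b,4}=(\Delta_{\delbar,sw})^2$. Hence
\[
\tilde\Delta_{A,sb}=(\Delta_{\delbar,sw})^2+(\delbar\oplus\del)_s(\delbar\oplus\del)^*_w,
\qquad \D(\tilde\Delta_{A,sb})=\D((\Delta_{\delbar,sw})^2)\cap\D\big((\delbar\oplus\del)_s(\delbar\oplus\del)^*_w\big).
\]
Similarly, $\tilde\Delta_{BC,bw}=(\Delta_{\delbar,sw})^2+(\del+\delbar)^*_s(\del+\delbar)_w$.

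Second, I identify the kernel. By the kernel formula in Subsection \ref{subsection self adjoint laplacians}, $\ker\tilde\Delta_{A,sb}=\ker\del\delbar_b\cap\ker(\del^*+\delbar^*)_w$. By \eqref{equation kernel image del* e delbar*} this equals $\ker\del\delbar_b\cap\ker\del^*_w\cap\ker\delbar^*_w$. By Theorem \ref{theorem complete kahler equality harmonic} this is $\ker\delbar_w\cap\ker\delbar^*_w=\ker\Delta_{\delbar,sw}$. Moreover $\ker(\Delta_{\delbar,sw})^2=\ker\Delta_{\delbar,sw}$, since $\la P^2x,x\ra=\lv Px\rv^2$. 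So
\[
(\ker\tilde\Delta_{A,sb})^\perp=(\ker(\Delta_{\delbar,sw})^2)^\perp .
\]
For the Bott-Chern case I use the corresponding line of Theorem \ref{theorem complete kahler equality harmonic}.

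Third, I prove the estimate. By Lemma \ref{lemma spectral gap equiv}, a)$\Rightarrow$d), the operator $(\Delta_{\delbar,sw})^2$ has a spectral gap. By a)$\Rightarrow$b) there is then $C>0$ with $\la(\Delta_{\delbar,sw})^2x,x\ra\ge C\lv x\rv^2$ for all $x\in\D((\Delta_{\delbar,sw})^2)\cap(\ker\Delta_{\delbar,sw})^\perp$. Now take $x\in\D(\tilde\Delta_{A,sb})\cap(\ker\tilde\Delta_{A,sb})^\perp$. Then $x$ lies in that set, and non-negativity of the second summand gives
\[
\la\tilde\Delta_{A,sb}x,x\ra=\lv\Delta_{\delbar,sw}x\rv^2+\lv(\delbar\oplus\del)^*_w x\rv^2\ge C\lv x\rv^2 .
\]
Applying b)$\Rightarrow$a) of Lemma \ref{lemma spectral gap equiv} then concludes.

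The only delicate point is the first step. There one must make sure that the operator identity \eqref{equation square dolbeault aeppli bc 4} holds with equality of domains, and that regrouping the summands does not change the domain. The identity itself relies on the essential self-adjointness of $(\Delta_\delbar)_0^2$, which is already established in Subsection \ref{subsection complete kahler}. The regrouping follows from Theorem \ref{theorem sum of self ajoint}, since sums are defined on intersections of domains. Once these are in place, the rest is immediate.
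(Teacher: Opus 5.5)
Your proposal is correct and follows essentially the same route as the paper. In both, the gap of $\Delta_{\delbar,sw}$ passes to its square via Lemma \ref{lemma spectral gap equiv}, and the square is identified with the fourth-order part $\tilde\Delta_{A,b,4}$ (resp.\ $\tilde\Delta_{BC,b,4}$) through \eqref{equation square dolbeault aeppli bc 4}; the kernels are then matched using Theorem \ref{theorem complete kahler equality harmonic}, and the non-negative first-order term is dropped in the quadratic-form estimate. The only differences are cosmetic: you write out the Aeppli case where the paper writes out the Bott--Chern case, and you assert equality of domains where the paper only uses the inclusion $\D(\tilde\Delta_{A,sb})\subseteq\D(\tilde\Delta_{A,b,4})$.
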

\begin{proof}
By Lemma \ref{lemma spectral gap equiv}, since $\Delta_{\delbar,sw}$ has a spectral gap in $L^2\Lambda^{p,q}$, then $(\Delta_{\delbar,sw})^2$ has a spectral gap in $L^2\Lambda^{p,q}$. Thanks to equation \eqref{equation square dolbeault aeppli bc 4}, it follows that $\tilde\Delta_{A,b,4}$ and $\tilde\Delta_{BC,b,4}$ have a spectral gap in $L^2\Lambda^{p,q}$. We now focus on $\tilde\Delta_{BC,b,4}$ and leave the discussion on $\tilde\Delta_{A,sb}$ for later. By Lemma \ref{lemma spectral gap equiv} there is a constant $C>0$ such that
\begin{align*}
\lv\del\delbar_{b'}\psi\rv^2+
\lv\delbar^*\del^*_w\psi\rv^2+
\lv\del^*\delbar_w\psi\rv^2+
\lv\delbar^*\del_w\psi\rv^2=
\la \psi,\tilde\Delta_{BC,b,4}\psi\ra
\ge C \la\psi,\psi\ra
\end{align*}
for all $\psi\in\D(\tilde\Delta_{BC,b,4})\cap (\ker\tilde\Delta_{BC,b,4})^\perp\cap L^2\Lambda^{p,q}$. 
Now $\ker\tilde\Delta_{BC,bw}=\ker\Delta_{\delbar,sw}$ by Theorem \ref{theorem complete kahler equality harmonic},  $\ker\Delta_{\delbar,sw}=\ker\Delta_{\delbar,sw}^2=\ker\tilde\Delta_{BC,b,4}$ by equation \eqref{equation square dolbeault aeppli bc 4} and $\D(\tilde\Delta_{BC,bw})\subseteq\D(\tilde\Delta_{BC,b,4})$ by definition, therefore for all $\psi\in\D(\tilde\Delta_{BC,bw})\cap (\ker\tilde\Delta_{BC,bw})^\perp\cap L^2\Lambda^{p,q}$
\begin{align*}
\la \psi,\tilde\Delta_{BC,bw}\psi\ra&=
\lv\del\delbar_{b'}\psi\rv^2+
\lv\delbar^*\del^*_w\psi\rv^2+
\lv\del^*\delbar_w\psi\rv^2+
\lv\delbar^*\del_w\psi\rv^2+
\lv\del_w\psi\rv^2+
\lv\delbar_w\psi\rv^2\\
&\ge
\lv\del\delbar_{b'}\psi\rv^2+
\lv\delbar^*\del^*_w\psi\rv^2+
\lv\del^*\delbar_w\psi\rv^2+
\lv\delbar^*\del_w\psi\rv^2
\ge C \la\psi,\psi\ra.
\end{align*}
Thus, again by Lemma \ref{lemma spectral gap equiv}, $\tilde\Delta_{BC,bw}$ has a spectral gap in $L^2\Lambda^{p,q}$. A similar argument works for $\tilde\Delta_{A,sb}$.
\end{proof}

We now use Theorem \ref{theorem spectral gap kodaira spencer laplacians} to obtain an alternative proof of \cite[Theorem 8.10]{HP}.

\begin{theorem}\label{theorem closed image del delbar}
Let $(M,g)$ be a complete K\"ahler manifold. If $\Delta_{\delbar,sw}$ has a spectral gap in $L^2\Lambda^{p,q}$, then for $b\in\{s,w\}$ it follows that
$\im\del\delbar_b$ and $\im\delbar^*\del^*_b$ are closed in $L^2\Lambda^{p,q}$.
\end{theorem}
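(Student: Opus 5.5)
We will derive the closedness of $\im\del\delbar_b$ from the spectral gap of the Bott--Chern Kodaira--Spencer Laplacian $\tilde\Delta_{BC,bw}$ given by Theorem \ref{theorem spectral gap kodaira spencer laplacians}, using the lower bound of Lemma \ref{lemma im closed}. The closedness of $\im\delbar^*\del^*_{b'}$ then follows for free. Indeed, $\delbar^*\del^*_{b'}=(\del\delbar_b)^t$ by Remark \ref{remark strong-weak}, and a closed densely defined operator has closed image iff its adjoint does (Lemma \ref{lemma im closed}, a)$\iff$c)). Letting $b$ range over $\{s,w\}$ then covers both operators in the statement.

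The first step is to set up the target inequality. By Lemma \ref{lemma im closed}, d), it suffices to find $C>0$ with
\[
\lv \psi\rv^2\le C\lv\delbar^*\del^*_{b'}\psi\rv^2\qquad\text{for all }\psi\in\D(\delbar^*\del^*_{b'})\cap\c{\im\del\delbar_b}\cap L^2\Lambda^{p,q}.
\]
Such a $\psi$ lies in $\ker(\del+\delbar)_w$ by Proposition \ref{proposition properties strong weak}(4), since $(\del+\delbar)\del\delbar=0$ and $b\le w$. By Theorem \ref{theorem orthogonal decompositions} (last line, with $a=b$, $c=w$), $\c{\im\del\delbar_b}$ is orthogonal to $\ker(\del+\delbar)_w\cap\ker\delbar^*\del^*_{b'}=\ker\tilde\Delta_{BC,bw}$. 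Hence $\psi\perp\ker\tilde\Delta_{BC,bw}$.

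The second step, which I expect to be the main obstacle, is domain control. Lemma \ref{lemma spectral gap equiv}, b), gives $\la\tilde\Delta_{BC,bw}\psi,\psi\ra\ge C'\lv\psi\rv^2$ only on $\D(\tilde\Delta_{BC,bw})$, whereas our $\psi$ is merely in $\D(\delbar^*\del^*_{b'})\cap\ker(\del+\delbar)_w$. I would resolve this by passing to the quadratic form domain. By the spectral theorem, the gap inequality extends to the form domain $\D(\tilde\Delta_{BC,bw}^{1/2})$, on which
\[
\lv\tilde\Delta_{BC,bw}^{1/2}\psi\rv^2=\lv\delbar^*\del^*_{b'}\psi\rv^2+\lv\delbar^*\del^*_s\del\delbar_w\psi\rv'^2+\dots+\lv\del_w\psi\rv^2+\lv\delbar_w\psi\rv^2,
\]
a sum of squared norms of the constituent closed operators. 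This is the form-sum description underlying Theorem \ref{theorem sum of self ajoint}, and it would need checking.

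To avoid these subtleties, I would instead use the cleaner route suggested by the proof of Theorem \ref{theorem spectral gap kodaira spencer laplacians}. Since $\tilde\Delta_{BC,b,4}=(\Delta_{\delbar,sw})^2$, its form domain is $\D(\Delta_{\delbar,sw})$ and its form is $\lv\Delta_{\delbar,sw}\psi\rv^2$. For $\psi\in\ker\del_w\cap\ker\delbar_w$, the terms $\del^*\delbar_w\psi$, $\delbar^*\del_w\psi$ and $\delbar^*\del^*_s\del\delbar_w\psi$ vanish. The remaining term gives
\[
\lv\Delta_{\delbar,sw}\psi\rv^2=\lv\delbar^*\del^*\psi\rv^2
\]
for smooth compactly supported approximants. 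The Kähler identities give $\Delta_\delbar=\delbar\delbar^*$ on $\delbar$-closed forms, and $\lv\delbar\delbar^*\psi\rv=\lv\del\delbar^*\psi\rv$-type identities hold there.

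Concretely, the third step runs as follows. Given $\psi\in\ker\del_w\cap\ker\delbar_w\cap\D(\delbar^*\del^*_{b'})$ orthogonal to harmonic forms, apply the spectral gap of $\Delta_{\delbar,sw}$ twice, via Lemma \ref{lemma spectral gap equiv 2}. First, since $\psi\in\ker\delbar$ and $\psi\perp$ harmonics, $\psi=\delbar\zeta$ with $\zeta=\delbar^*_s G\psi\in\c{\im\delbar^*}$, where $G$ is the bounded Green operator, and $\lv\psi\rv\le C_1\lv\delbar^*\psi\rv$. Second, $\delbar^*\psi\in\ker\del^*$ is orthogonal to harmonics, because it lies in $\im\delbar^*$. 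Applying the gap for $\del$ (using $\Delta_\del=\Delta_\delbar$ and $\del\delbar^*=-\delbar^*\del$) yields $\lv\delbar^*\psi\rv\le C_2\lv\del\delbar^*\psi\rv$... but the quantity we need is $\lv\del^*\delbar^*\psi\rv$. I would instead apply the gap for $\del^*$ on $\c{\im \del^*}$ to $\delbar^*\psi$; indeed $\delbar^*\psi=-\del^*(\dots)$ by the Kähler identities, via $\psi=\del\gamma$ from Theorem \ref{theorem reduced l2 del delbar lemma}. This gives $\lv\delbar^*\psi\rv\le C_2\lv\del^*\delbar^*\psi\rv$. Combining the two bounds yields $\lv\psi\rv\le C_1C_2\lv\delbar^*\del^*_{b'}\psi\rv$, with domain issues handled by Corollary \ref{corollary complete kahler kernel image deldelbar}, and this concludes the argument.
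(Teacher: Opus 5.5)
Your reductions are correct. Since $\delbar^*\del^*_{b'}=(\del\delbar_b)^t$, closedness of $\im\del\delbar_b$ for both $b$ gives both claims, which is neater than the paper's separate appeal to $\tilde\Delta_{A,sb}$. Lemma \ref{lemma im closed}\,d) gives the right target inequality. Any such $\psi$ lies in $\ker\del_w\cap\ker\delbar_w$ and is orthogonal to $\ker\tilde\Delta_{BC,bw}$.

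The gap is in the argument you actually commit to, the third step. The chain $\lv\psi\rv\le C_1\lv\delbar^*\psi\rv\le C_1C_2\lv\del^*\delbar^*\psi\rv=C_1C_2\lv\delbar^*\del^*_{b'}\psi\rv$ needs three things: $\psi\in\D(\delbar^*_w)$, $\delbar^*_w\psi\in\D(\del^*_w)$, and $\del^*_w\delbar^*_w\psi=-\delbar^*\del^*_{b'}\psi$. Your hypothesis only places $\psi$ in the domain of an extension of the second-order composite $\delbar^*\del^*$. That alone says nothing about $\delbar^*\psi$ being in $L^2$. This is exactly the domain obstacle you flagged in your second step, and the third step simply assumes it away. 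Corollary \ref{corollary complete kahler kernel image deldelbar} concerns only kernels and closures of images, so it does not supply it. Likewise, the identity $\lv\Delta_{\delbar,sw}\psi\rv=\lv\delbar^*\del^*\psi\rv$ is usable only once $\psi\in\D(\Delta_{\delbar,sw})$ is known, because compactly supported approximants are not $\del$- and $\delbar$-closed.

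The middle of the chain is also wrong as written:
\begin{itemize}
\item $\delbar^*\psi$ lies in $\ker\del$ (since $\del\delbar^*=-\delbar^*\del$), not in $\ker\del^*$.
\item From $\psi=\del\gamma$ one gets $\delbar^*\psi=-\del\delbar^*\gamma\in\im\del$, not $\im\del^*$.
\item The estimate $\lv x\rv\le C\lv\del^*x\rv$ holds on $\D(\del^*_w)\cap\c{\im\del_s}$, not on $\c{\im\del^*}$.
\end{itemize}
If $\delbar^*\psi$ were in $\ker\del^*$, you would get $\delbar^*\del^*\psi=0$, and the target inequality would force $\psi=0$.

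The form-domain route you set aside actually closes the proof. By Theorem \ref{theorem sum of self ajoint}, $\tilde\Delta_{BC,bw}$ is self-adjoint on the intersection of the domains of its summands. Consider the closed form
$\lv\delbar^*\del^*_{b'}\psi\rv^2+\lv\del\delbar_w\psi\rv^2+\lv\delbar^*\del_w\psi\rv^2+\lv\del^*\delbar_w\psi\rv^2+\lv(\del+\delbar)_w\psi\rv^2$ on the intersection of the corresponding domains. Note that the form of $\delbar^*\del^*_s\del\delbar_w$ is $\lv\del\delbar_w\psi\rv^2$, not what you wrote. The self-adjoint operator associated with this form extends $\tilde\Delta_{BC,bw}$, hence equals it, so that intersection is $\D(\tilde\Delta_{BC,bw}^{1/2})$. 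By \emph{(3)} of Proposition \ref{proposition properties strong weak}, your $\psi$ lies in it with form value $\lv\delbar^*\del^*_{b'}\psi\rv^2$. The spectral theorem then gives $\lv\delbar^*\del^*_{b'}\psi\rv^2\ge C\lv\psi\rv^2$.

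The paper sidesteps domain questions for individual $\psi$ by working on the image side:
\begin{itemize}
\item By the gap, $L^2\Lambda^{p,q}=\ker\tilde\Delta_{BC,bw}\oplus\im\tilde\Delta_{BC,bw}$.
\item By \emph{(2)} of Proposition \ref{proposition properties strong weak}, $\im\tilde\Delta_{BC,bw}\subseteq\im\del\delbar_b+(\im\del^*_s+\im\delbar^*_s)$, and the last summand is closed by the gap of $\Delta_{\delbar,sw}$.
\item These three pieces are mutually orthogonal and exhaust $L^2\Lambda^{p,q}$, so $\im\del\delbar_b$ is the orthogonal complement of a closed subspace.
\end{itemize}
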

\begin{proof}
By Theorem \ref{theorem spectral gap kodaira spencer laplacians}, $\tilde\Delta_{BC,bw}$ has a spectral gap in $L^2\Lambda^{p,q}$. By Lemma \ref{lemma spectral gap equiv}, this is equivalent to the closure of the image of $\tilde\Delta_{BC,bw}$. Therefore we can orthogonally decompose
\begin{align*}
L^2\Lambda^{p,q}&=\ker\tilde\Delta_{BC,bw}\overset{\perp}{\oplus}\im\tilde\Delta_{BC,bw}\\
&\subseteq \ker\tilde\Delta_{BC,bw}\overset{\perp}{\oplus}\left(\im\del\delbar_b + \im \delbar^*\del^*_s+\im \del^*\delbar_s+\im\delbar^*\del_s+\im(\del^*\oplus\delbar^*)_s\right)\\
&\subseteq \ker\tilde\Delta_{BC,bw}\overset{\perp}{\oplus}\im\del\delbar_b \overset{\perp}{\oplus} (\im\del^*_s+\im\delbar^*_s)\subseteq L^2\Lambda^{p,q}.
\end{align*}
The first inclusion follows by definition of $\tilde\Delta_{BC,bw}$, while the second inclusion is justified by \emph{(2)} of Proposition \ref{proposition properties strong weak} and the fact that 
\[
\im(\del^*\oplus\delbar^*)_s\subseteq\c{\im(\del^*\oplus\delbar^*)_s}=\c{\im\del^*_s}+\c{\im\delbar^*_s}=\im\del^*_s+\im\delbar^*_s,
\]
where the first equality is equation \eqref{equation kernel image del* e delbar*} and the second is obtained via Lemmas \ref{lemma spectral gap equiv 2} and \ref{lemma im closed} using the spectral gap of $\Delta_{\delbar,sw}=\Delta_{\del,sw}$ in $L^2\Lambda^{p,q}$.
The orthogonality of the above decompositions is easy to check.
The closure of $\im\del\delbar_b$ now follows from the orthogonality of the decomposition of $L^2\Lambda^{p,q}$ in the last line. Using the spectral gap of $\tilde\Delta_{A,sb}$ we similarly find the closure of $\im\delbar^*\del^*_b$ in $L^2\Lambda^{p,q}$.
\end{proof}

A direct consequence of Theorem \ref{theorem closed image del delbar}, Corollary \ref{corollary complete kahler kernel image deldelbar} and \cite[Proposition 1.6]{B1} is the following.

\begin{corollary}[{\cite[Corollary 8.13]{HP}}]\label{corollary del delbar strong weak}
Let $(M,g)$ be a complete K\"ahler manifold. If $\Delta_{\delbar,sw}$ has a spectral gap in $L^2\Lambda^{p,q}$, then in $L^2\Lambda^{p,q}$
\begin{align*}
\del\delbar_s=\del\delbar_w, &&\delbar^*\del^*_s=\delbar^*\del^*_w.
\end{align*}
\end{corollary}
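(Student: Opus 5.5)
The strategy is to pass through closed-range theory and a result on the domains of adjoints. The statement to prove is Corollary \ref{corollary del delbar strong weak}: $\del\delbar_s=\del\delbar_w$ and $\delbar^*\del^*_s=\delbar^*\del^*_w$ on $L^2\Lambda^{p,q}$. The relevant abstract fact, which is the content of \cite[Proposition 1.6]{B1}, reads as follows.

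\emph{Abstract fact.} Let $T_1\subseteq T_2$ be closed densely defined operators with $\ker T_1=\ker T_2$ and $\im T_1=\im T_2$ (in particular if both images are closed and their closures agree). Then $T_1=T_2$.

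Its proof is short. Take $x\in\D(T_2)$. Since $T_2x\in\im T_2=\im T_1$, there is $y\in\D(T_1)$ with $T_1y=T_2x$. Because $T_1\subseteq T_2$, we get $T_2(x-y)=0$, so $x-y\in\ker T_2=\ker T_1\subseteq\D(T_1)$. Hence $x\in\D(T_1)$.

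First we apply this to $T_1=\del\delbar_s\subseteq T_2=\del\delbar_w$, acting from $L^2\Lambda^{p-1,q-1}$ to $L^2\Lambda^{p,q}$. The kernels coincide by Corollary \ref{corollary complete kahler kernel image deldelbar}. The closures of the images coincide by the same corollary. By Theorem \ref{theorem closed image del delbar}, applied to both $b=s$ and $b=w$, both images are closed in $L^2\Lambda^{p,q}$. Therefore $\im\del\delbar_s=\im\del\delbar_w$, and the abstract fact gives $\del\delbar_s=\del\delbar_w$.

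For the second equality we could take adjoints. By Remark \ref{remark strong-weak}, $(\del\delbar_s)^t=\delbar^*\del^*_w$ and $(\del\delbar_w)^t=\delbar^*\del^*_s$, so the first equality immediately yields $\delbar^*\del^*_s=\delbar^*\del^*_w$, acting from $L^2\Lambda^{p,q}$ to $L^2\Lambda^{p-1,q-1}$. Alternatively, one can rerun the same argument, using the closedness of $\im\delbar^*\del^*_b$ from Theorem \ref{theorem closed image del delbar} together with the kernel and image equalities of Corollary \ref{corollary complete kahler kernel image deldelbar}.

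The main obstacle is bookkeeping of bidegrees. The spectral gap hypothesis is assumed only on $L^2\Lambda^{p,q}$, and Theorem \ref{theorem closed image del delbar} gives closed images of $\del\delbar$ and $\delbar^*\del^*$ inside $L^2\Lambda^{p,q}$. The abstract argument needs the closedness of the image in the target space of each operator. For $\del\delbar$ landing in $L^2\Lambda^{p,q}$ this target is exactly the space where closedness is known. For $\delbar^*\del^*$ starting from $L^2\Lambda^{p,q}$ it is safest to use the adjoint route above, which needs no further closedness. Throughout, "in $L^2\Lambda^{p,q}$" should be interpreted accordingly: $\del\delbar$ with target in $L^2\Lambda^{p,q}$, and $\delbar^*\del^*$ with domain in $L^2\Lambda^{p,q}$.
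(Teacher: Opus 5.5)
Your argument is the same as the paper's. It combines closed range from Theorem \ref{theorem closed image del delbar}, equal kernels and equal image closures from Corollary \ref{corollary complete kahler kernel image deldelbar}, and the abstract fact that \cite[Proposition 1.6]{B1} supplies, which you prove correctly. Your proof of $\del\delbar_s=\del\delbar_w$ from $L^2\Lambda^{p-1,q-1}$ into $L^2\Lambda^{p,q}$ is complete.

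What you should fix is your reading of the second equality. In the paper, \lq\lq in $L^2\Lambda^{p,q}$'' refers to operators whose images lie in $L^2\Lambda^{p,q}$. For the second equality this means $\delbar^*\del^*\colon L^2\Lambda^{p+1,q+1}\to L^2\Lambda^{p,q}$. That is exactly the operator whose range Theorem \ref{theorem closed image del delbar} shows to be closed, via $\tilde\Delta_{A,sb}$. It is also the version needed later, to replace $\square_{A,ab}$ on $L^2\Lambda^{p,q}$ by $\square_{A,sw}$ under the hypotheses of Theorem \ref{theorem spectral gap varouchas laplacians}, which assume no gap in $L^2\Lambda^{p+1,q+1}$.

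Your adjoint route only gives $\delbar^*\del^*_s=\delbar^*\del^*_w$ out of $L^2\Lambda^{p,q}$. That is just the adjoint of your first equality and says nothing new. Re-declaring the statement to mean this weakens it.

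The \lq\lq rerun'' you mention in passing is the right argument, and you should make it the main one. Take $T_1=\delbar^*\del^*_s\subseteq T_2=\delbar^*\del^*_w$ from $L^2\Lambda^{p+1,q+1}$ to $L^2\Lambda^{p,q}$. Then:
\begin{itemize}
\item the kernels agree in $L^2\Lambda^{p+1,q+1}$ by Corollary \ref{corollary complete kahler kernel image deldelbar}, which holds in every bidegree and needs no gap;
\item the closures of the images agree by the same corollary;
\item $\im\delbar^*\del^*_s$ is closed in $L^2\Lambda^{p,q}$ by Theorem \ref{theorem closed image del delbar}.
\end{itemize}
Your abstract fact then gives the equality. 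Taking adjoints of both equalities, you obtain $s=w$ for $\del\delbar$ and for $\delbar^*\del^*$, both into and out of $L^2\Lambda^{p,q}$.
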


\begin{remark}
We briefly pause to make a couple of observations on Corollary \ref{corollary del delbar strong weak}. To the author’s knowledge, all existing criteria for proving the equality between minimal and maximal closed extensions of differential operators apply either to first-order operators (see, \textit{e.g.}, \cite[Lemma 4]{AV} or \cite[Section 3.B]{Che}) or to elliptic operators (see, \textit{e.g.}, \cite[Section 3.B]{Che}, or \cite[Corollary 1.1]{Sh} for manifolds of bounded geometry).

Therefore, Corollary \ref{corollary del delbar strong weak}, together with \cite[Theorem 7.6]{HP}—which provides a criterion for $\del\delbar_s=\del\delbar_w$ satisfied on manifolds of bounded geometry—constitutes an exception in the literature, since $\del\delbar$ is a second-order operator and is not elliptic.

Finally, one may reasonably expect that the equality $\del\delbar_s=\del\delbar_w$ should hold on manifolds of bounded geometry due to the uniform curvature bounds. On the other hand, Corollary \ref{corollary del delbar strong weak} requires strong assumptions on the complex structure, but does not assume bounded geometry.
\end{remark}

We are now ready to prove the spectral gap also of Varouchas Laplacians. Thanks to Corollary \ref{corollary del delbar strong weak}, it is enough to show the spectral gap of $\square_{A,sw}$ and $\square_{BC,sw}$.

\begin{theorem}\label{theorem spectral gap varouchas laplacians}
    Let $(M,g)$ be a complete K\"ahler manifold. 
    If $\Delta_{\delbar,sw}$ has a spectral gap in $L^2\Lambda^{p,q}$, $L^2\Lambda^{p-1,q}$ and $L^2\Lambda^{p,q-1}$, then $\square_{A,sw}$ has a spectral gap in $L^2\Lambda^{p,q}$. If $\Delta_{\delbar,sw}$ has a spectral gap in $L^2\Lambda^{p,q}$, $L^2\Lambda^{p+1,q}$ and $L^2\Lambda^{p,q+1}$, then $\square_{BC,sw}$ has a spectral gap in $L^2\Lambda^{p,q}$.
\end{theorem}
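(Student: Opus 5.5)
We prove the Bott-Chern statement; the Aeppli one follows from the same argument with the roles of $\del\delbar$ and $\delbar\oplus\del$ interchanged. We use the characterisation b) of Lemma \ref{lemma spectral gap equiv}, applied to the decomposition of $\square_{BC,sw}$ into two non-negative self-adjoint pieces:
\[
P:=\del\delbar_s\delbar^*\del^*_w,\qquad Q:=(\del+\delbar)^*_w(\del+\delbar)_s,
\]
so that $\square_{BC,sw}=P+Q^2$. For $\psi\in\D(\square_{BC,sw})$ we have
\[
\la\square_{BC,sw}\psi,\psi\ra=\lv\delbar^*\del^*_w\psi\rv^2+\lv Q\psi\rv^2 .
\]
Since $\ker\square_{BC,sw}=\ker P\cap\ker Q$, the goal is the estimate
\[
\lv\delbar^*\del^*_w\psi\rv^2+\lv Q\psi\rv^2\ge C\lv\psi\rv^2
\quad\text{for } \psi\perp\ker P\cap\ker Q .
\]

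\textbf{Step 1: decomposition of $\psi$.} By Theorem \ref{theorem orthogonal decompositions}, together with the closed-image results, we decompose $\psi\perp\ker\square_{BC,sw}$ orthogonally as $\psi=\psi_1+\psi_2$. Here $\psi_1\in\im\del\delbar_s$, which is closed by Theorem \ref{theorem closed image del delbar}, and $\psi_2\in\c{\im(\del+\delbar)^*_w}$. The hypotheses on the bidegrees $(p+1,q)$ and $(p,q+1)$ are used here: through Lemmas \ref{lemma spectral gap equiv 2} and \ref{lemma im closed}, they give that $\im\del^*$ and $\im\delbar^*$ into $L^2\Lambda^{p,q}$ are closed. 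Hence $\im(\del^*\oplus\delbar^*)_s=\im\del^*_s+\im\delbar^*_s$ is closed, and $\psi_2\in\im(\del+\delbar)^*$.

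Two vanishing facts split the quadratic form along this decomposition:
\begin{itemize}
\item[(a)] $\psi_1\in\ker(\del+\delbar)_w$, so $Q\psi_1=0$;
\item[(b)] $\psi_2\in\ker\delbar^*\del^*_w$, by (4) of Proposition \ref{proposition properties strong weak}.
\end{itemize}
The domains need care here. We check that $\psi_1,\psi_2$ lie in the relevant domains by using that the orthogonal projections onto these closed subspaces commute with the operators, which in turn follows from the complex structure $Q P=0=P Q$. Granting this,
\[
\lv\delbar^*\del^*_w\psi\rv=\lv\delbar^*\del^*_w\psi_1\rv,\qquad \lv Q\psi\rv=\lv Q\psi_2\rv .
\]

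\textbf{Step 2: the two separate estimates.} For $\psi_1$: since $\im\del\delbar$ is closed, Lemma \ref{lemma im closed} d) applied to $\del\delbar_s$ gives
\[
\lv\delbar^*\del^*_w\psi_1\rv^2\ge c_1\lv\psi_1\rv^2 .
\]
For $\psi_2$: $Q$ is the Laplacian-type operator $(\del+\delbar)^t(\del+\delbar)$, and its image is closed because $\im(\del+\delbar)_s$ and its adjoint image are closed. The domination $\lv\del\psi\rv^2+\lv\delbar\psi\rv^2\ge c\lv\psi\rv^2$ on $\c{\im(\del+\delbar)^*}$ holds because $\del$ and $\delbar$ separately have closed image from $L^2\Lambda^{p,q}$, by the gap in bidegree $(p,q)$ and Lemma \ref{lemma spectral gap equiv 2}. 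Then Lemma \ref{lemma spectral gap equiv} gives
\[
\la Q\psi_2,\psi_2\ge c_2\lv\psi_2\rv^2 ,
\]
and, via the equivalence a)$\Leftrightarrow$d), also
\[
\lv Q\psi_2\rv^2\ge c_2^2\lv\psi_2\rv^2 .
\]
Adding the two estimates gives the claim with $C=\min(c_1,c_2^2)$.

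\textbf{Main obstacle.} We expect the difficult step to be Step 2 for $Q$: showing that $(\del+\delbar)_s$ has closed image, i.e.\ that $\im\del^*_s+\im\delbar^*_s$ is closed and not merely a sum of closed subspaces. We would argue this via the Kähler identity $\del\delbar^*=-\delbar^*\del$, which makes these two images orthogonal modulo harmonic forms: it gives $\la\del^*u,\delbar^*v\ra=\la u,\del\delbar^*v\ra=-\la u,\delbar^*\del v\ra$. Combining this with the spectral gap of $\Delta_{\delbar,sw}=\Delta_{\del,sw}$ in the neighbouring bidegrees, we would obtain a uniform lower bound
\[
\lv\del\psi\rv^2+\lv\delbar\psi\rv^2\ge c\lv\psi\rv^2
\quad\text{for }\psi\perp\ker\del_w\cap\ker\delbar_w .
\]
For this we would first try writing $\psi=\delbar^*\delbar G\psi+\del^*\del G\psi+\dots$ with $G$ the bounded Green operator of $\Delta_{\delbar,sw}$.
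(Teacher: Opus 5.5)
Your reduction works except at one point, and that point is the whole difficulty. You need $\im(\del+\delbar)_w$ out of $L^2\Lambda^{p,q}$ to be closed, i.e.\ the bound $\lv\del\psi\rv^2+\lv\delbar\psi\rv^2\ge c\lv\psi\rv^2$ on $\D(\del_w)\cap\D(\delbar_w)\cap(\ker\del_w\cap\ker\delbar_w)^\perp$. This is what gives the spectral gap of $Q$, and neither justification you offer proves it.

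In Step 1 you conclude that $\im\del^*_s+\im\delbar^*_s$ is closed because both summands are closed. A sum of two closed subspaces need not be closed. In Step 2 you derive the joint bound from the separate closedness of $\im\del$ and $\im\delbar$. Separate lower bounds on $(\ker\del_w)^\perp$ and on $(\ker\delbar_w)^\perp$ give a bound on $(\ker\del_w\cap\ker\delbar_w)^\perp$ only if the two kernels meet at a positive angle, and that is exactly what has to be shown.

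Your fallback in the last paragraph rests on a false claim. The K\"ahler identities do not make $\im\del^*$ and $\im\delbar^*$ orthogonal modulo harmonic forms: both contain $\im\del^*\delbar^*$, since $\del^*\delbar^*\phi=-\delbar^*\del^*\phi$. Your own computation only gives $\la\del^*u,\delbar^*v\ra=-\la\delbar u,\del v\ra$, which does not vanish in general.

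A telltale sign is that the hypotheses in bidegrees $(p+1,q)$ and $(p,q+1)$ never do any work in your proposal. Closedness of $\im\del^*$ and $\im\delbar^*$ into $L^2\Lambda^{p,q}$ already follows from the gap in bidegree $(p,q)$, by Lemmas \ref{lemma spectral gap equiv 2} and \ref{lemma im closed}.

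The paper fills this step by quoting \cite[Theorem 8.8]{HP} together with Lemma \ref{lemma im closed}. That is presumably where the neighbouring-bidegree hypotheses are used. To make your route self-contained you would need an actual proof of this fact. For instance, you could show via the $L^2$ K\"ahler identities that the orthogonal projections onto $\im\del,\im\del^*,\im\delbar,\im\delbar^*$ in $L^2\Lambda^{p,q}$ commute. Then $(\ker\del_w\cap\ker\delbar_w)^\perp$ splits orthogonally into closed pieces, on each of which $\del$ or $\delbar$ is bounded below. The same gap recurs in your Aeppli case, for the closedness of $\im(\delbar\oplus\del)_s$.

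Granting that input, the rest of your argument is correct and more direct than the paper's. The paper gets $\im Q$ closed by passing through $\Delta_{BC,sw}$ and $\Delta_{BC,sw}^2$, then applies Lemma \ref{lemma spectral gap equiv 2} to the complex $\del\delbar_s$, $Q$. Your splitting $\psi=\psi_1+\psi_2$ is just the proof of c)$\Rightarrow$b) of that lemma for this complex.

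The domain issue is also simpler than you make it, and no commuting projections are needed. Since $\psi_1\in\ker Q$, you get $\psi_2=\psi-\psi_1\in\D(Q^2)$ with $Q\psi_2=Q\psi$. Since $\psi_2\in\ker\delbar^*\del^*_w$, you get $\psi_1=\psi-\psi_2\in\D(\delbar^*\del^*_w)$ with $\delbar^*\del^*_w\psi_1=\delbar^*\del^*_w\psi$.
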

\begin{proof}
    Assume that $\Delta_{\delbar,sw}$ has a spectral gap in $L^2\Lambda^{p,q}$, $L^2\Lambda^{p+1,q}$ and $L^2\Lambda^{p,q+1}$. Then $\im\del\delbar_s\subseteq L^2\Lambda^{p,q}$ is closed by Theorem \ref{theorem closed image del delbar}, and $\im(\del+\delbar)_w\subseteq L^2\Lambda^{p+1,q}\oplus L^2\Lambda^{p,q+1}$ is closed by \cite[Theorem 8.8]{HP} and Lemma \ref{lemma im closed}.
    Therefore, by Lemma \ref{lemma spectral gap equiv 2} with $D_{i-1}:=\del\delbar_s$ and $D_i:=(\del+\delbar)_w$, the non-elliptic Laplacian $\Delta_{BC,sw}$ has a spectral gap (cf. \cite[Corollary 8.12]{HP}).
    By Lemma \ref{lemma spectral gap equiv} also its square $\Delta_{BC,sw}^2$ has a spectral gap. We now apply again Lemma \ref{lemma spectral gap equiv 2}, but this time we choose the differentials of the Hilbert complex to be $D_{i-1}:=\del\delbar_s\delbar^*\del^*_w$ and $D_i:=(\del^*\oplus\delbar^*)_s(\del+\delbar)_w$. Since 
    \[
    \Delta_{BC,sw}^2=(\del\delbar_s\delbar^*\del^*_w)^2+((\del^*\oplus\delbar^*)_s(\del+\delbar)_w)^2,
    \]
    it follows that $\im\del\delbar_s\delbar^*\del^*_w$ and $\im(\del^*\oplus\delbar^*)_s(\del+\delbar)_w$ are closed. We apply one more time Lemma \ref{lemma spectral gap equiv 2}, with $D_{i-1}:=\del\delbar_s$ and $D_i:=(\del^*\oplus\delbar^*)_s(\del+\delbar)_w$, finally obtaining the spectral gap of $\Delta_{\delbar,sw}$.

    If $\Delta_{\delbar,sw}$ has a spectral gap in $L^2\Lambda^{p,q}$, $L^2\Lambda^{p-1,q}$ and $L^2\Lambda^{p,q-1}$, then $\im(\delbar\oplus\del)_s\subseteq L^2\Lambda^{p,q}$ is closed by \cite[Theorem 8.8]{HP}, and $\im\del\delbar_w\subseteq L^2\Lambda^{p+1,q+1}$ is closed by Theorem \ref{theorem closed image del delbar} and Lemma \ref{lemma im closed}. Then the spectral gap of $\square_{A,sw}$ in $L^2\Lambda^{p,q}$ follows similarly to the Bott-Chern case.
\end{proof}

\section{\texorpdfstring{$L^2$-$\del\delbar$}{L2-deldelbar}-Lemma}\label{section l2 del delbar lemma}

A consequence of the spectral gap of the self-adjoint extensions of the elliptic Aeppli and Bott-Chern Laplacians is a $\del\delbar$-Lemma holding on smooth $L^2$-forms, which improves Theorem \ref{theorem reduced l2 del delbar lemma}.

\begin{theorem}\label{theorem l2 del delbar lemma}
    Let $(M,g)$ be a complete K\"ahler manifold such that $\Delta_{\delbar,sw}$ has a spectral gap in $L^2\Lambda^{k}_\C$. Given a smooth $L^2$-form $\alpha\in L^2A^k_\C$ which satisfies $\del\alpha=\delbar\alpha=0$, then the following conditions are equivalent:
    \begin{enumerate}
        \item $\alpha=\del\delbar\beta$, for $\beta\in L^2 A^{k-2}_\C$;
        \item $\alpha=\del\gamma$, for $\gamma\in L^2 A^{k-1}_\C$;
        \item $\alpha=\delbar\zeta$, for $\zeta\in L^2 A^{k-1}_\C$;
        \item $\alpha=\del\eta+\delbar\theta$, for $\eta,\theta\in L^2 A^{k-1}_\C$;
        \item $\alpha=d\lambda$, for $\lambda\in L^2 A^{k-1}_\C$.
    \end{enumerate}
\end{theorem}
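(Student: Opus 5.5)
The easy implications are formal. Since $\del\delbar\beta=\del(\delbar\beta)=\delbar(-\del\beta)$, (1) gives (2) and (3) with $\gamma=\delbar\beta$, $\zeta=-\del\beta$. These lie in $L^2A^{k-1}_\C$ because $\alpha$ is $L^2$. Clearly (2) or (3) gives (4). Moreover (2) gives (5), since $\del\gamma$ is closed and we may take $\lambda=\gamma$: indeed $\delbar\del\gamma=-\delbar\alpha=0$, so $d\gamma=\del\gamma+\delbar\gamma$. To avoid using this, I would instead close the cycle through the reduced statement. The real content is showing that each of (2)--(5) implies (1).

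\textbf{Step 1: each of (2)--(5) puts $\alpha$ in $\c{\im\del\delbar_s}$.} Assume one of (2)--(5). By Proposition \ref{proposition properties strong weak} (6), the smooth $L^2$ primitive lies in the domain of the weak extension, so $\alpha\in\im\del_w$, $\im\delbar_w$, $\im\del_w+\im\delbar_w$ or $\im d_w$. By completeness these extensions coincide with the strong ones. Also $\alpha\in\ker\del_w\cap\ker\delbar_w$. Theorem \ref{theorem reduced l2 del delbar lemma} then gives $\alpha\in\c{\im\del\delbar_s}$. For (4), I would note $\im\del_s+\im\delbar_s\subseteq\c{\im\del_s}+\c{\im\delbar_s}$.

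\textbf{Step 2: remove the closure.} The spectral gap in $L^2\Lambda^k_\C=\oplus_{p+q=k}L^2\Lambda^{p,q}$ gives a gap in each $L^2\Lambda^{p,q}$ with $p+q=k$. Work componentwise on the $(p,q)$-components of $\alpha$, which are still $\del$- and $\delbar$-closed and still lie in $\c{\im\del\delbar_s}$ of the right bidegree. Theorem \ref{theorem closed image del delbar} then says $\im\del\delbar_s$ is closed, so $\alpha=\del\delbar_s\beta_0$ for some $\beta_0\in L^2\Lambda^{k-2}_\C$. The problem is that $\beta_0$ need not be smooth.

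\textbf{Step 3 (main obstacle): a smooth primitive.} I would replace $\beta_0$ by its projection onto $(\ker\del\delbar_s)^\perp=\c{\im\delbar^*\del^*_w}$. The natural candidate is $\beta=\delbar^*\del^*\mu$, with $\mu$ obtained by inverting the Bott--Chern Kodaira--Spencer Laplacian on $\alpha$. By Theorem \ref{theorem spectral gap kodaira spencer laplacians}, $\tilde\Delta_{BC,sw}$ has a spectral gap, hence closed range. Also $\alpha\perp\ker\tilde\Delta_{BC,sw}$, because $\alpha\in\c{\im\del\delbar_s}$ is orthogonal to $\ker\delbar^*\del^*_w$. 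So there is $\mu\in\D(\tilde\Delta_{BC,sw})\cap(\ker)^\perp$ with $\tilde\Delta_{BC,sw}\mu=\alpha$. Since $\tilde\Delta_{BC}$ is elliptic and $\alpha$ is smooth, elliptic regularity for the distributional equation makes $\mu$ smooth.

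Next I must show $\alpha=\del\delbar\delbar^*\del^*\mu$, i.e. that the remaining terms of $\tilde\Delta_{BC}\mu$ vanish. The terms $\del\delbar\delbar^*\del^*\mu$ lie in $\im\del\delbar$. The terms $\delbar^*\del^*\del\delbar\mu+\del^*\delbar\delbar^*\del\mu+\delbar^*\del\del^*\delbar\mu+\del^*\del\mu+\delbar^*\delbar\mu$ lie in $\c{\im\del^*_s}+\c{\im\delbar^*_s}$, which is orthogonal to $\ker\del_w\cap\ker\delbar_w\ni\alpha$. Hence their sum is orthogonal to $\alpha$ and lies in the orthogonal complement of $\c{\im\del\delbar_s}$, using the orthogonality from the proof of Theorem \ref{theorem closed image del delbar}. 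Since $\alpha-\del\delbar\delbar^*\del^*\mu$ equals that sum and also lies in $\c{\im\del\delbar}$, it must vanish.

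Then $\beta:=\delbar^*\del^*\mu$ is smooth and satisfies $\del\delbar\beta=\alpha$. For $\beta$ to be $L^2$ I need $\mu\in\D(\delbar^*\del^*_w)$. This should follow from $\mu\in\D(\tilde\Delta_{BC,sw})$, since the summand $\del\delbar_s\delbar^*\del^*_w$ requires $\mu\in\D(\delbar^*\del^*_w)$. By Proposition \ref{proposition properties strong weak} (5) the smooth action agrees with the extension, so $\beta\in L^2A^{k-2}_\C$, giving (1). The delicate checks are exactly this domain bookkeeping and the orthogonality splitting of $\tilde\Delta_{BC,sw}\mu$.
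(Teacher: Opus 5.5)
Your proof of (2)--(5)$\Rightarrow$(1) is correct and is essentially the paper's argument. You invert $\tilde\Delta_{BC,sw}$ on $\alpha\perp\ker\tilde\Delta_{BC,sw}$, get a smooth $\mu$ by elliptic regularity, and kill every term except $\del\delbar\delbar^*\del^*\mu$ by orthogonality to $\ker\del_w\cap\ker\delbar_w$. Your Step~2 is never used.

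The gap is in the directions you call formal. From (1) you take $\gamma=\delbar\beta$ and $\zeta=-\del\beta$, and assert they are $L^2$ \lq\lq because $\alpha$ is $L^2$''. This does not follow. Since $\del\delbar$ is not elliptic, knowing $\beta\in L^2$ and $\del\delbar\beta\in L^2$ gives no control on $\delbar\beta$ or $\del\beta$. For instance, $\beta=\del u$ with $\del u\in L^2$ gives $\alpha=0$, while $\delbar\beta=\delbar\del u$ need not be square integrable (use small-amplitude, high-frequency bumps). The paper only gets $\del\beta,\delbar\beta\in L^2$, for its own choice of $\beta$, under the bounded geometry assumptions of Remark~\ref{remark bounded geometry}. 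Your route to (5) also fails: with $\lambda=\gamma$ one gets $d\gamma=\alpha+\delbar\gamma$, and nothing forces $\delbar\gamma=0$. \lq\lq Closing the cycle through the reduced statement'' only gives $\alpha\in\c{\im d_s}$, not a smooth $L^2$ primitive. So, as written, you have not shown that (1) implies (2), (3), (4) or (5).

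The missing idea is that these directions also require building new primitives, this time with the second-order Laplacians. This is why the paper introduces $G_\del$, $G_\delbar$, $G_d$ (and $G_A$). Here is the argument for (1)$\Rightarrow$(2).
\begin{enumerate}
\item Item \emph{(6)} of Proposition~\ref{proposition properties strong weak} and Corollary~\ref{corollary complete kahler kernel image deldelbar} give $\alpha\in\im\del\delbar_w\subseteq\c{\im\del\delbar_s}$.
\item By Theorem~\ref{theorem complete kahler equality harmonic}, this gives $\alpha\perp\ker\Delta_{\del,sw}$.
\item Since $\Delta_{\del,sw}=\Delta_{\delbar,sw}$ has closed range, solve $\Delta_{\del,sw}\nu=\alpha$; elliptic regularity makes $\nu$ smooth.
\item Then $\alpha-\del\del^*\nu=\del^*\del\nu$ lies in $\ker\del_w\cap\c{\im\del^*_s}=\{0\}$.
\item Hence $\alpha=\del\gamma$ with $\gamma:=\del^*\nu\in L^2A^{k-1}_\C$, because $\nu\in\D(\del^*_w)$.
\end{enumerate}
The same argument with $\Delta_{\delbar,sw}$ and $\Delta_{d,sw}$ produces $\zeta$ and $\lambda$.

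There is also a smaller slip in Step~1 for (4). You only know that the sum $\del\eta+\delbar\theta$ is $L^2$, not each summand, so $\alpha\in\im\del_w+\im\delbar_w$ is unjustified. Instead, apply item \emph{(6)} to $\delbar\oplus\del$ acting on $(\theta,\eta)$, bidegree by bidegree, to get $\alpha\in\im(\delbar\oplus\del)_s$. Then use $\c{\im(\delbar\oplus\del)_s}=\c{\im\del_s}+\c{\im\delbar_s}$.
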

\begin{proof}
    Since $\Delta_{\delbar,sw}=\Delta_{\del,sw}=\frac12\Delta_{d,sw}$ has a spectral gap in $L^2\Lambda^{k}_\C$, by Theorem \ref{theorem spectral gap kodaira spencer laplacians} also $\tilde\Delta_{BC,sw}$ and $\tilde\Delta_{A,sw}$ have a spectral gap in $L^2\Lambda^{k}_\C$.
    We define the orthogonal projection
    \[
    P_{BC}:L^2\Lambda^{k}_\C\to\ker \tilde\Delta_{BC,sw}
    \]
    and the Green operator
    \begin{align*}
    G_{BC}:&\im \tilde\Delta_{BC,sw}\to \D(\tilde\Delta_{BC,sw})\\
    & \tilde\Delta_{BC,sw}\beta\mapsto\beta.
    \end{align*}
    Being $\im \tilde\Delta_{BC,sw}$ closed, and recalling the orthogonal decomposition
    \[
    L^2\Lambda^{k}_\C=\ker \tilde\Delta_{BC,sw}\oplus\im \tilde\Delta_{BC,sw},
    \]
    we can extend the Green operator on the whole space $L^2\Lambda^{k}_\C$ simply by setting $G_{BC}=0$ on $\ker \tilde\Delta_{BC,sw}$. Given any form $\alpha\in L^2\Lambda^{k}_\C$, it decomposes as
    \[
    \alpha=P_{BC}\alpha+(id-P_{BC})\alpha,
    \]
    where $(id-P_{BC})\alpha=\tilde\Delta_{BC,sw}\beta$. Now $G_{BC}\alpha=\beta$, therefore
    \[
    \alpha=P_{BC}\alpha+\tilde\Delta_{BC,sw}G_{BC}\alpha.
    \]
    If, in particular, $\alpha$ is smooth and so $\alpha\in L^2A^k_\C$, then by elliptic regularity (see, \textit{e.g.}, \cite[Theorem 2.1]{HP}) also $G_{BC}\alpha=\beta\in L^2A^k_\C\cap \D(\tilde\Delta_{BC,sw})$ is smooth. In particular, by \emph{(5)} of Proposition \ref{proposition properties strong weak} we obtain 
    \[
    \tilde\Delta_{BC,sw}G_{BC}\alpha=\tilde\Delta_{BC}G_{BC}\alpha.
    \]
    The same holds for analogue operators $P_\delta$ and $G_\delta$ for $\delta\in\{A,\del,\delbar,d\}$.
    Therefore, if $\alpha\in L^2A^k_\C$, for $\delta\in\{A,BC\}$ and $\epsilon\in\{\del,\delbar,d\}$ we deduce decompositions
    \[
    \alpha=P_{\delta}\alpha+\tilde\Delta_{\delta}G_{\delta}\alpha=P_\epsilon\alpha+\Delta_{\epsilon}G_{\epsilon}\alpha,
    \]
    with smooth forms $G_{\delta}\alpha,G_{\epsilon}\alpha\in L^2A^k_\C$. Recall that by Theorem \ref{theorem complete kahler equality harmonic} the kernels of $\Delta_{\delbar,sw}=\Delta_{\del,sw}=\frac12\Delta_{d,sw}$, $\tilde\Delta_{BC,sw}$ and $\tilde\Delta_{A,sw}$ coincide, thus $P_{BC}\alpha=0$ iff $P_\delta=0$ for all $\delta\in\{A,\del,\delbar,d\}$.

    Now assume that $\alpha\in L^2A^k_\C$ satisfies $\del\alpha=\delbar\alpha=0$. We know $\ker\del_w\cap\ker\delbar_w\subseteq L^2A^k_\C$ is $L^2$-orthogonal to
    \begin{align*}
    &\im d_s^*, &&\im\del_s^*, & &\im\delbar_s^*, &\im(\del^*\oplus\delbar^*)_s, &&\im\del\delbar_s^*,
    \end{align*}
    as in the proof of Theorem \ref{theorem reduced l2 del delbar lemma}. Thanks to this and to \emph{(5)} of Proposition \ref{proposition properties strong weak}, we obtain
    \begin{align*}
    &\alpha=\tilde\Delta_{A}G_{A}\alpha &&\iff && \alpha=(\del\del^*+\delbar\delbar^*)G_{A}\alpha,\\
    &\alpha=\tilde\Delta_{BC}G_{BC}\alpha &&\iff && \alpha=\del\delbar\delbar^*\del^*G_{BC}\alpha,\\
    &\alpha=\Delta_{\del}G_{\del}\alpha &&\iff && \alpha=\del\del^*G_{\del}\alpha,\\
    &\alpha=\Delta_{\delbar}G_{\delbar}\alpha &&\iff && \alpha=\delbar\delbar^*G_{\delbar}\alpha,\\
    &\alpha=\Delta_{d}G_{d}\alpha &&\iff && \alpha=dd^*G_{d}\alpha.
    \end{align*}
    It is then enough to set $\beta:=\delbar^*\del^*G_{BC}\alpha$, $\gamma:=\del^*G_{\del}\alpha$, $\zeta:=\delbar^*G_{\delbar}\alpha$, $\eta:=\del^*G_{A}\alpha$, $\theta:=\delbar^*G_{A}\alpha$, $\lambda:=d^*G_{d}\alpha$, which lie in $L^2\Lambda^\bullet_\C$ since the image of each Green operator is the domain of the corresponding Laplacian.
    This concludes the proof.
\end{proof}

\begin{remark}
    If, on a complete K\"ahler manifold $(M,g)$, $\Delta_{\delbar,sw}$ has a spectral gap only in $L^2\Lambda^{p,q}$, then conditions (1)-(5) are equivalent provided that $\alpha\in L^2 A^{p,q}$ and $k=p+q$. The same holds for any direct sum of spaces $L^2\Lambda^{p,q}$ with $k=p+q$.
\end{remark}

\begin{remark}\label{remark bounded geometry}
    Suppose that, in addition to the assumptions of Theorem \ref{theorem l2 del delbar lemma}, the manifold $(M,g)$ is of bounded geometry and the elliptic Laplacians $\Delta_{\delbar}=\Delta_{\del}=\frac12\Delta_{d}$, $\tilde\Delta_{BC}$ and $\tilde\Delta_{A}$ are $\cinf$-bounded and uniformly elliptic, with the bundles of differential forms which are of bounded geometry (cf. \cite{Sh}). Examples of such manifolds are given by the universal coverings of K\"ahler hyperbolic manifolds \cite{G}.  Then, by \cite[Proposition 1.1]{Sh} the domains of the above operators coincide with the Sobolev spaces
    \[
    \D(\Delta_{\delbar,sw})=\D(\Delta_{\del,sw})=\D(\Delta_{d,sw})=W^2_2(M,\Lambda^\bullet_\C)
    \]
    and
    \[
    \D(\tilde\Delta_{A,sw})=\D(\tilde\Delta_{BC,sw})=W^4_2(M,\Lambda^\bullet_\C),
    \]
    with the notation for Sobolev spaces as in \cite{Sh}. Therefore, since the image of each Green operator is one of the above domains, and thus the image is a Sobolev space, in the statement of Theorem \ref{theorem l2 del delbar lemma} we can choose $\beta\in W^2_2(M,\Lambda^{k-2}_\C)$ and $\gamma,\zeta,\eta,\theta,\lambda\in W^1_2(M,\Lambda^{k-1}_\C)$. In particular, $\del\beta,\delbar\beta\in W^1_2(M,\Lambda^{k-1}_\C)\subseteq L^2\Lambda_\C^{k-1}$.
\end{remark}

\end{document}